\theoremstyle{definition}
\newtheorem{definition}{Definition}[section]
\newtheorem{remark}[definition]{Remark}
\newtheorem{example}[definition]{Example}
\theoremstyle{plain}
\newtheorem{lemma}[definition]{Lemma}
\newtheorem{theorem}[definition]{Theorem}
\begin{document}

\title[Every $3 \times 3 \times 3$ array over $\mathbb{C}$ has rank $\le 5$]
{On Kruskal's theorem that every $\mathbf{3 \times 3 \times 3}$ array has rank at most 5}

\author{Murray R. Bremner}

\address{Department of Mathematics and Statistics, University of Saskatchewan, Canada}

\email{bremner@math.usask.ca}

\author{Jiaxiong Hu}

\address{Department of Mathematics, Simon Fraser University, Canada}

\email{hujiaxiong@gmail.com}

\begin{abstract}
In the first part of this paper, we consider $3 \times 3 \times 3$ arrays with complex entries,
and provide a complete self-contained proof of Kruskal's theorem that the maximum rank is 5.
In the second part, we provide a complete classification of the canonical forms of $3 \times 3 \times 3$ 
arrays over $\mathbb{F}_2$; in particular, we obtain explicit examples of such arrays with rank 6.
\end{abstract}

\subjclass[2010]{Primary 15A69. Secondary 15-04, 15A03, 15A21, 20G20, 20G40.}

\keywords{3-dimensional arrays, tensor decomposition, canonical forms.}

\maketitle


In 1989, Kruskal \cite[page 10]{Kruskal} stated without proof that every $3 \times 3 \times 3$ array with real entries has rank at most 5.
A few years later, Rocci \cite{Rocci} circulated a simplified proof of this result, based on Kruskal's unpublished hand-written notes.
The details of this argument appear never to have been published.
In sections \ref{tenBergesection}--\ref{Kruskalsection}, we consider $3 \times 3 \times 3$ arrays with complex entries,
and provide a complete self-contained proof that the maximum rank is 5.

In section \ref{F2section} we consider this problem over the field $\mathbb{F}_2$ with two elements.
A remarkable fact, first noted by von zur Gathen \cite{vonzurGathen}, is that in this case
there exist $3 \times 3 \times 3$ arrays of rank 6.
We provide a complete classification of the canonical forms of $3 \times 3 \times 3$ arrays over $\mathbb{F}_2$;
in particular, we obtain explicit examples of such arrays with rank 6.

We use without reference many basic results on multidimensional arrays which can be found in
de Silva and Lim \cite{deSilvaLim} and Kolda and Bader \cite{KoldaBader}.


\section{Preliminaries on 3-dimensional arrays} \label{preliminaries}

We consider a $p \times q \times r$ array $X$ with entries in an arbitrary field $\mathbb{F}$ of scalars:
  \[
  X = [ \, x_{ijk} \, ],
  \qquad
  x_{ijk} \in \mathbb{F},
  \qquad
  1 \le i \le p, \qquad
  1 \le j \le q, \qquad
  1 \le k \le r.
  \]
By a slice of $X$ we mean any (2-dimensional) submatrix obtained by fixing one index.
Fixing $i$ gives a horizontal slice, fixing $j$ gives a vertical slice, and fixing $k$ gives a frontal slice.
The matrix form of $X$ is the $p \times qr$ matrix obtained by concatenating the frontal slices
$X_1, \dots, X_r$ from left to right:
  \[
  X
  =
  \left[
  \begin{array}{c|c|c}
  X_1 & \cdots & X_r
  \end{array}
  \right]
  =
  \left[
  \begin{array}{ccc|c|ccc}
  x_{111} & \cdots & x_{1q1} & \quad \cdots \quad & x_{11r} & \cdots & x_{1qr} \\
  \vdots & \ddots & \vdots & \quad \cdots \quad & \vdots & \ddots & \cdots \\
  x_{p11} & \cdots & x_{pq1} & \quad \cdots \quad & x_{p1r} & \cdots & x_{pqr}
  \end{array}
  \right].
  \]
Given three column vectors,
  \[
  \mathbf{a} = \begin{bmatrix} a_1 \\ \vdots \\ a_p \end{bmatrix} \in \mathbb{F}^p,
  \qquad
  \mathbf{b} = \begin{bmatrix} b_1 \\ \vdots \\ b_q \end{bmatrix} \in \mathbb{F}^q,
  \qquad
  \mathbf{c} = \begin{bmatrix} c_1 \\ \vdots \\ c_r \end{bmatrix} \in \mathbb{F}^r,
  \]
their outer product $\mathbf{a} \otimes \mathbf{b} \otimes \mathbf{c}$ is the $p \times q \times r$ array whose
$ijk$ entry is $a_i b_j c_k$.
A simple tensor is an outer product of nonzero vectors.
A fundamental problem is to represent the array $X$ as a sum of simple tensors:
  \[
  X = \sum_{i=1}^n \mathbf{a}^{(i)} \otimes \mathbf{b}^{(i)} \otimes \mathbf{c}^{(i)}.
  \]
The rank of the array $X$ is the smallest non-negative integer $n$ for which this decomposition is possible.
The rank is 0 if and only if every entry of the array is 0; the rank is
1 if and only if the array is a simple tensor.

The rank does not change if we permute the slices in each direction.
Given permutations $\alpha \in S_p$, $\beta \in S_q$, $\gamma \in S_r$, we form another $p \times q \times r$ array by
  \[
  \big( ( \alpha, \beta, \gamma ) \cdot X \big)_{ijk}
  =
  x_{\alpha(i)\beta(j)\gamma(k)}.
  \]
More generally, the rank does not change if we apply a change of basis in each direction.
Given invertible matrices
  \[
  A = ( a_{i_1i_2} ) \in GL(p,\mathbb{F}),
  \qquad
  B = ( b_{j_1j_2} ) \in GL(q,\mathbb{F}),
  \qquad
  C = ( c_{k_1k_2} ) \in GL(r,\mathbb{F}),
  \]
we form another $p \times q \times r$ array by
  \[
  \big( ( A, B, C ) \cdot X \big)_{i_1j_1k_1}
  =
  \sum_{i_2=1}^p \sum_{j_2=1}^q \sum_{k_2=1}^r
  a_{i_1i_2} b_{j_1j_2} c_{k_1k_2}
  x_{i_2j_2k_2}.
  \]
The rank does not change if we permute the directions; however, this permutes the dimensions $p, q, r$
and hence may give a different ordered triple $(p,q,r)$.
If we write the dimensions as $p_1 \times p_2 \times p_3$ with corresponding indices $i_1, i_2, i_3$
then applying a permutation $\delta \in S_3$ gives an array of size $p_{\delta(1)} \times p_{\delta(2)} \times p_{\delta(3)}$ defined by
  \[
  ( \delta \cdot X )_{i_{\delta(1)}i_{\delta(2)}i_{\delta(3)}} = x_{i_1i_2i_3}.
  \]
In the rest of this paper, we often use these rank-preserving transformations without further comment.


\section{ten Berge's theorem on $2 \times 2 \times 2$ arrays} \label{tenBergesection}

The results in this section are taken with minor changes from ten Berge \cite{tenBerge}.
However, for us the base field is $\mathbb{C}$ whereas for ten Berge it is $\mathbb{R}$.
We recall these results in detail since they are essential to the analysis of $3 \times 3 \times 3$ arrays.
For $2 \times 2 \times 2$ arrays, the rank decomposition takes the form
  \[
  X = \sum_{i=1}^n \mathbf{a}^{(i)} \otimes \mathbf{b}^{(i)} \otimes \mathbf{c}^{(i)},
  \quad \text{where} \quad
  \mathbf{a}^{(i)}, \mathbf{b}^{(i)}, \mathbf{c}^{(i)} \in \mathbb{C}^2
  \; \text{for} \;
  1 \le i \le n.
  \]
We express this decomposition in terms of three $2 \times n$ matrices $A, B, C$:
  \[
  A
  =
  \begin{bmatrix}
  \, \mathbf{a}^{(1)} & \cdots & \mathbf{a}^{(n)}
  \end{bmatrix},
  \quad
  B
  =
  \begin{bmatrix}
  \, \mathbf{b}^{(1)} & \cdots & \mathbf{b}^{(n)}
  \end{bmatrix},
  \quad
  C
  =
  \begin{bmatrix}
  \, \mathbf{c}^{(1)} & \cdots & \mathbf{c}^{(n)}
  \end{bmatrix}.
  \]

\begin{lemma} \label{lemma1} \cite[p.~632]{tenBerge}
The rank of a nonzero $2 \times 2 \times 2$ array $X$ is the least integer $n \ge 1$ such that
the frontal slices $X_1$, $X_2$ have the form $X_1 = A D B^t$, $X_2 = A E B^t$
where $A$, $B$ are $2 \times n$ matrices and $D$, $E$ are $n \times n$ diagonal matrices.
\end{lemma}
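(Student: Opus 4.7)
The plan is to prove the lemma by setting up an explicit two-way translation between a rank decomposition $X = \sum_{i=1}^n \mathbf{a}^{(i)} \otimes \mathbf{b}^{(i)} \otimes \mathbf{c}^{(i)}$ and a factorization of the frontal slices in the form $X_1 = ADB^t$, $X_2 = AEB^t$. The starting observation is that for a single simple tensor $\mathbf{a} \otimes \mathbf{b} \otimes \mathbf{c}$ with $\mathbf{c} = (c_1, c_2)^t \in \mathbb{C}^2$, the two frontal slices are $c_1 \mathbf{a}\mathbf{b}^t$ and $c_2 \mathbf{a}\mathbf{b}^t$; summing over $i = 1, \dots, n$ and collecting the column vectors into $A = [\mathbf{a}^{(1)} | \cdots | \mathbf{a}^{(n)}]$ and $B = [\mathbf{b}^{(1)} | \cdots | \mathbf{b}^{(n)}]$, the two frontal slices become
\[
X_1 = \sum_{i=1}^n c_1^{(i)} \mathbf{a}^{(i)} (\mathbf{b}^{(i)})^t = A D B^t,
\qquad
X_2 = \sum_{i=1}^n c_2^{(i)} \mathbf{a}^{(i)} (\mathbf{b}^{(i)})^t = A E B^t,
\]
where $D = \mathrm{diag}(c_1^{(1)}, \dots, c_1^{(n)})$ and $E = \mathrm{diag}(c_2^{(1)}, \dots, c_2^{(n)})$.

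For the first direction, if $\mathrm{rank}(X) = n$ I would take a minimal rank decomposition and read off $A$, $B$, $D$, $E$ exactly as above, producing the required factorization with parameter $n$. For the converse direction, given any factorization $X_1 = ADB^t$, $X_2 = AEB^t$ with parameter $n$, I would reverse the construction: set $\mathbf{c}^{(i)} = (d_i, e_i)^t$ using the diagonal entries of $D$ and $E$, and verify entrywise that $X = \sum_{i=1}^n \mathbf{a}^{(i)} \otimes \mathbf{b}^{(i)} \otimes \mathbf{c}^{(i)}$. Some summands may vanish, namely those with a zero column of $A$, a zero column of $B$, or simultaneously $d_i = e_i = 0$; these are discarded, leaving an expression of $X$ as a sum of at most $n$ nonzero simple tensors, hence $\mathrm{rank}(X) \le n$.

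Combining the two directions, the rank of $X$ coincides with the least $n \ge 1$ for which the matrix factorization exists. The argument is essentially a direct unpacking of the definition of the outer product and of rank; the only bookkeeping care required is in discarding the vanishing summands in the converse direction to ensure that the resulting simple tensors are genuinely nonzero. I do not anticipate a substantive obstacle, and the main value of the lemma lies not in the depth of its proof but in the reformulation it provides, which is what will be exploited in the later analysis of $3 \times 3 \times 3$ arrays.
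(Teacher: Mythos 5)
Your proposal is correct and takes essentially the same approach as the paper: compute the frontal slices of a sum of simple tensors to obtain $X_1 = ADB^t$, $X_2 = AEB^t$, and reverse the construction for the converse. You in fact add a small point the paper leaves implicit, namely discarding degenerate summands (zero columns of $A$ or $B$, or $d_i = e_i = 0$) so that the reconstructed decomposition really is a sum of simple tensors in the sense the paper defines.
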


\begin{proof}
The first frontal slice $X_1$ has the form
  \allowdisplaybreaks
  \begin{align*}
  X_1
  &=
  \sum_{i=1}^n
  \left[
  \begin{array}{cc}
  a^{(i)}_1 b^{(i)}_1 c^{(i)}_1 & a^{(i)}_1 b^{(i)}_2 c^{(i)}_1 \\[3pt]
  a^{(i)}_2 b^{(i)}_1 c^{(i)}_1 & a^{(i)}_2 b^{(i)}_2 c^{(i)}_1
  \end{array}
  \right]
  =
  \sum_{i=1}^n
  \left[
  \begin{array}{cc}
  A_{1i} \, c^{(i)}_1 B^t_{i1} & A_{1i} \, c^{(i)}_1 B^t_{i2} \\[3pt]
  A_{2i} \, c^{(i)}_1 B^t_{i1} & A_{2i} \, c^{(i)}_1 B^t_{i2}
  \end{array}
  \right]
  \\
  &=
  \left[
  \begin{array}{cc}
  \sum_{i=1}^n A_{1i} \, c^{(i)}_1 B^t_{i1} & \sum_{i=1}^n A_{1i} \, c^{(i)}_1 B^t_{i2} \\[3pt]
  \sum_{i=1}^n A_{2i} \, c^{(i)}_1 B^t_{i1} & \sum_{i=1}^n A_{2i} \, c^{(i)}_1 B^t_{i2}
  \end{array}
  \right]
  =
  A \, C_1 B^t,
  \end{align*}
where $C_1$ is the $n \times n$ diagonal matrix whose diagonal entries $c^{(1)}_1$, $c^{(2)}_1$, $\dots$, $c^{(n)}_1$ come from row 1 of $C$.
Similarly, for the second frontal slice we have $X_2 = A \, C_2 B^t$, where $C_2$ is the $n \times n$ diagonal matrix whose diagonal entries
come from the second row of $C$.
Conversely, if the two frontal slices $X_1$ and $X_2$ can be written as $A \, C_1 B^t$ and $A \, C_2 B^t$ where $A$ and $B$
are $2 \times n$ matrices and $C_1$ and $C_2$ are $n \times n$ diagonal matrices, then $X$ has the given decomposition.
\end{proof}

\begin{definition}
We call the $2 \times 2 \times 2$ array $X$ \textbf{superdiagonal} if it has one of these forms for $\alpha, \beta \in \mathbb{C} \setminus \{0\}$:
  \[
  \left[
  \begin{array}{cc|cc}
  \alpha & 0 & 0 & 0 \\
  0 & 0 & 0 & \beta
  \end{array}
  \right],
  \quad
  \left[
  \begin{array}{cc|cc}
  0 & \alpha & 0 & 0 \\
  0 & 0 & \beta & 0
  \end{array}
  \right],
  \quad
  \left[
  \begin{array}{cc|cc}
  0 & 0 & 0 & \beta \\
  \alpha & 0 & 0 & 0
  \end{array}
  \right],
  \quad
  \left[
  \begin{array}{cc|cc}
  0 & 0 & \beta & 0 \\
  0 & \alpha & 0 & 0
  \end{array}
  \right].
  \]
\end{definition}

\begin{lemma} \label{lemma2} \cite[p.~632]{tenBerge}
A superdiagonal array has rank 2.
\end{lemma}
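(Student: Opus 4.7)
The plan is to combine an explicit upper bound with a lower bound from Lemma \ref{lemma1}, after first reducing the four superdiagonal forms to a single case.

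All four forms are interconvertible by permuting the two horizontal slices, the two vertical slices, and/or the two frontal slices: starting from the first form (with $x_{111}=\alpha$, $x_{222}=\beta$ the only nonzero entries), swapping vertical slices produces the second, swapping horizontal slices produces the third, and swapping frontal slices (after relabeling $\alpha\leftrightarrow\beta$) produces the fourth. Since such permutations preserve rank, it suffices to prove the claim for the first form.

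For the upper bound, the first form is visibly a sum of two simple tensors, namely $X = \alpha\, \mathbf{e}_1 \otimes \mathbf{e}_1 \otimes \mathbf{e}_1 + \beta\, \mathbf{e}_2 \otimes \mathbf{e}_2 \otimes \mathbf{e}_2$, where $\mathbf{e}_1,\mathbf{e}_2$ denote the standard basis of $\mathbb{C}^2$. Hence the rank is at most $2$.

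For the lower bound, I apply Lemma \ref{lemma1} in the case $n=1$: if $X$ had rank $1$, there would exist $\mathbf{a},\mathbf{b} \in \mathbb{C}^2$ and scalars $d,e$ with $X_1 = d\,\mathbf{a}\mathbf{b}^t$ and $X_2 = e\,\mathbf{a}\mathbf{b}^t$, forcing the two frontal slices to be proportional. But for the first form $X_1 = \mathrm{diag}(\alpha,0)$ and $X_2 = \mathrm{diag}(0,\beta)$ are linearly independent since $\alpha\neq 0 \neq \beta$, a contradiction. Combining the two bounds gives rank exactly $2$. I do not anticipate a real obstacle; the only point requiring mild care is checking that the slice-permutation reductions in the first step indeed carry each of the four listed forms to the first one (possibly after interchanging the roles of $\alpha$ and $\beta$, which is harmless since both are arbitrary nonzero scalars).
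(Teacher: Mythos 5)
Your proof is correct and follows essentially the same route as the paper: reduce to the first superdiagonal form by slice permutations, exhibit an explicit two-term decomposition for the upper bound, and invoke Lemma~\ref{lemma1} with $n=1$ to see that a rank-$1$ array would have proportional frontal slices, which the superdiagonal form does not. Your version simply spells out the permutation reductions and the proportionality contradiction in slightly more detail than the paper does.
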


\begin{proof}
By applying permutations of the slices, we may assume that $X$ has the first form.
It is then clear that the array has rank $\le 2$ since
  \[
  \left[
  \begin{array}{cc|cc}
  \alpha & 0 & 0 & 0 \\
  0 & 0 & 0 & \beta
  \end{array}
  \right]
  =
  \left[ \begin{array}{c} \alpha \\ 0 \end{array} \right]
  \otimes
  \left[ \begin{array}{c} 1 \\ 0 \end{array} \right]
  \otimes
  \left[ \begin{array}{c} 1 \\ 0 \end{array} \right]
  +
  \left[ \begin{array}{c} 0 \\ \beta \end{array} \right]
  \otimes
  \left[ \begin{array}{c} 0 \\ 1 \end{array} \right]
  \otimes
  \left[ \begin{array}{c} 0 \\ 1 \end{array} \right].
  \]
To find the general form of an array of rank 1 according to Lemma \ref{lemma1}, we set
  \[
  A = \left[ \begin{array}{c} a_1 \\ a_2 \end{array} \right],
  \qquad
  B = \left[ \begin{array}{c} b_1 \\ b_2 \end{array} \right],
  \qquad
  D = \left[ \begin{array}{c} d \end{array} \right],
  \qquad
  E = \left[ \begin{array}{c} e \end{array} \right].
  \]
We obtain
  \[
  X_1
  =
  A D B^t
  =
  \left[
  \begin{array}{cc}
  a_1 d b_1 & a_1 d b_2 \\
  a_2 d b_1 & a_2 d b_2
  \end{array}
  \right],
  \qquad
  X_2
  =
  A E B^t
  =
  \left[
  \begin{array}{cc}
  a_1 e b_1 & a_1 e b_2 \\
  a_2 e b_1 & a_2 e b_2
  \end{array}
  \right],
  \]
or more simply $X_1 = d(AB^t)$ and $X_2 = e(AB^t)$.
Thus $X_1$ and $X_2$ are scalar multiples of the same matrix of rank 1.
This does not hold for a superdiagonal array, which therefore has rank $\ge 2$.
\end{proof}

\begin{lemma} \label{lemma3} \cite[p.~632]{tenBerge}
Let $X$ be a nonzero $2 \times 2 \times 2$ array which is not superdiagonal.
Then $X$ has rank 1 if and only if all six of its slices are singular.
\end{lemma}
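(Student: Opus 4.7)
The plan is to treat the two directions separately, using Lemma~\ref{lemma1} with $n=1$ as the main tool: it says $X$ has rank $1$ precisely when $X_1$ and $X_2$ are scalar multiples of a common $2 \times 2$ matrix of rank $1$. For the forward direction, if $X$ has rank $1$ then Lemma~\ref{lemma1} immediately gives $X_1 = d\,\mathbf{a}\mathbf{b}^t$ and $X_2 = e\,\mathbf{a}\mathbf{b}^t$, so both frontal slices are singular; applying the same argument after permuting the directions handles the horizontal and vertical slices.

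For the reverse direction, I would prove directly that if all six slices of a nonzero $X$ are singular, then $X$ is either of rank $1$ or superdiagonal, which suffices since $X$ is assumed not to be superdiagonal. After swapping $X_1$ and $X_2$ if necessary, we may assume $X_1 \ne 0$, and since $X_1$ is singular of size $2 \times 2$ we can write $X_1 = \mathbf{u}_1 \mathbf{v}_1^t$ with $\mathbf{u}_1, \mathbf{v}_1$ nonzero. If $X_2 = 0$ then $X = \mathbf{u}_1 \otimes \mathbf{v}_1 \otimes e_1$ already has rank $1$; otherwise write $X_2 = \mathbf{u}_2 \mathbf{v}_2^t$ with $\mathbf{u}_2, \mathbf{v}_2$ nonzero. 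By the rank-$1$ characterisation, we may further assume that at least one of $\mathbf{u}_1 \parallel \mathbf{u}_2$, $\mathbf{v}_1 \parallel \mathbf{v}_2$ fails.

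Next I would eliminate the two ``mixed'' cases. Suppose $\mathbf{u}_2 = \lambda \mathbf{u}_1$ while $\{\mathbf{v}_1, \mathbf{v}_2\}$ is independent; then the $i$-th horizontal slice equals $(u_1)_i [\mathbf{v}_1 \mid \lambda \mathbf{v}_2]$ with invertible bracketed matrix, so both horizontal slices being singular forces $\mathbf{u}_1 = 0$, contradicting $X_1 \ne 0$. The symmetric mixed case, in which $\mathbf{v}_2 = \mu \mathbf{v}_1$ while $\{\mathbf{u}_1, \mathbf{u}_2\}$ is independent, is excluded analogously by the vertical slices.

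In the remaining case both $\{\mathbf{u}_1, \mathbf{u}_2\}$ and $\{\mathbf{v}_1, \mathbf{v}_2\}$ are independent. Singularity of the $i$-th horizontal slice $[(u_1)_i \mathbf{v}_1 \mid (u_2)_i \mathbf{v}_2]$ then forces, for each $i \in \{1,2\}$, that $(u_1)_i = 0$ or $(u_2)_i = 0$; together with nonzeroness and independence of $\mathbf{u}_1, \mathbf{u}_2$, this pins the pair, up to reordering, to nonzero scalar multiples of $e_1$ and $e_2$. The parallel analysis of vertical slices does the same for $\mathbf{v}_1, \mathbf{v}_2$. Substituting these back into $X_1$ and $X_2$ will exhibit $X$ as one of the four superdiagonal forms listed in the Definition. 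The main obstacle will be keeping the several case splits organised and checking that the four possible orderings of the $e_1, e_2$ assignments reproduce precisely the four listed superdiagonal patterns rather than only a subset of them.
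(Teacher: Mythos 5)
Your proof is correct, and the reverse direction is organised differently from the paper's. The paper first splits on whether some slice is zero, then parametrises $X_1 = [\mathbf{a} \mid \lambda\mathbf{a}]$ after normalising one column, and works through subcases on $\lambda$ until it either exhibits rank $1$ or reaches a contradiction with the ``not superdiagonal'' hypothesis. You instead write both nonzero frontal slices as rank-one products $X_1 = \mathbf{u}_1\mathbf{v}_1^t$, $X_2 = \mathbf{u}_2\mathbf{v}_2^t$ and case-split on the two parallelism conditions $\mathbf{u}_1 \parallel \mathbf{u}_2$ and $\mathbf{v}_1 \parallel \mathbf{v}_2$: both hold gives rank $1$ directly, the two mixed cases are killed by observing that the horizontal (resp.\ vertical) slices factor as a scalar times a fixed invertible $2 \times 2$ matrix $[\mathbf{v}_1 \mid \lambda\mathbf{v}_2]$ (resp.\ $[\mathbf{u}_1 \mid \mu\mathbf{u}_2]$), and both-fail forces each $\mathbf{u}_i$ and $\mathbf{v}_j$ to lie on a coordinate axis, which reproduces exactly the four superdiagonal patterns. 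Your version has the advantage of being symmetric in the two frontal slices and in the two tensor factors, and the superdiagonal cases emerge as an explicit outcome rather than as a contradiction; the paper's version is more computational but needs fewer structural observations about outer products. Both arguments are sound. The only thing to tighten when writing yours out fully is to note explicitly that nonzero plus singular forces rank exactly $1$ for a $2 \times 2$ matrix (so the outer-product form is available), and that the scalars $\lambda, \mu$ in the mixed cases are nonzero because $\mathbf{u}_2, \mathbf{v}_2$ are nonzero, which is what makes the bracketed $2 \times 2$ matrices invertible.
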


\begin{proof}
($\Rightarrow$)
We show that if $X$ has a non-singular slice, then its rank is $\ge 2$.
By permuting the directions, we may assume that a frontal slice is non-singular.
By permuting the frontal slices, we may assume that $X_1$ is non-singular.
If the rank of $X$ is 1 then as in the proof of Lemma \ref{lemma2} we have
$X_1 = d( \mathbf{a} \otimes \mathbf{b} )$ where $\mathbf{a}$ and $\mathbf{b}$ are nonzero vectors in $\mathbb{C}^2$;
but this matrix is clearly singular.

($\Leftarrow$)
We show that if all six slices are singular then $X$ has rank 1.

Case 1: Some slice is zero; by permuting the directions and slices we may assume that $X_1 = 0$.
Since $X_2$ is nonzero and singular we have $X_2 = \mathbf{a} \otimes \mathbf{b}$
for some nonzero vectors $\mathbf{a}, \mathbf{b} \in \mathbb{C}^2$.
But then $X = \mathbf{a} \otimes \mathbf{b} \otimes \mathbf{e}_1$
where $\mathbf{e}_1 = [0,1]^t$.

Case 2: No slice is zero.
Since $X_1$ is nonzero and singular, we have $X_1 = \mathbf{a} \otimes \mathbf{b}$ where $\mathbf{a} = [ a_1, a_2 ]^t$
and $\mathbf{b} = [b_1,b_2]^t$ are nonzero vectors.
By transposing the vertical slices of $X$ if necessary, we may assume that the first column of $X_1$ is nonzero.
Equivalently, $\mathbf{b} = [ 1, \lambda ]^t$ for some $\lambda \in \mathbb{C}$; thus $X_1 = [ \mathbf{a} | \lambda \mathbf{a} ]$.

Subcase 2(a): $\lambda = 0$.
Since the first vertical slice is singular,
$X = [ \mathbf{a}, \mathbf{0} | \mu \mathbf{a} , \mathbf{d} ]$
for some $\mu \in \mathbb{C}$ and some $\mathbf{d}$; we have $\mathbf{d} \ne \mathbf{0}$ since the second vertical slice is nonzero.
If $\mu \ne 0$ then since $X_2$ is singular, there is $\nu \in \mathbb{C} \setminus \{0\}$ such that
$X = [ \mathbf{a} , \mathbf{0} | \mu \mathbf{a}, \nu \mathbf{a} ]$.
In this case, since the horizontal slices are nonzero, we have $a_1 \ne 0$, $a_2 \ne 0$.
Since the horizontal slices are singular, it follows that $\mu = 0$, $\nu = 0$.  But then the
second frontal slice is zero, giving a contradiction.
If $\mu = 0$ then $X = [ \mathbf{a}, \mathbf{0} | \mathbf{0}, \mathbf{d} ]$.
In this case, since the horizontal slices are nonzero and singular, $X$
must be a superdiagonal array, again giving a contradiction.

Subcase 2(b): $\lambda \ne 0$.  We have $X = [ \mathbf{a}, \lambda \mathbf{a} | \mu \mathbf{a}, \mathbf{d} ]$.
But $\mathbf{a} \ne \mathbf{0}$ and the second vertical slice is singular, so $\mathbf{d} = \nu \mathbf{a}$ for some $\nu \in \mathbb{C}$,
giving $X = [ \mathbf{a}, \lambda \mathbf{a} | \mu \mathbf{a}, \nu \mathbf{a} ]$.
Since either $a_1 \ne 0$ or $a_2 \ne 0$ (or both), singularity of the horizontal slices implies that $\nu = \lambda \mu$.
Then $X = [ \mathbf{a}, \lambda \mathbf{a} | \mu \mathbf{a}, \lambda \mu \mathbf{a} ]
  =
  [ a_1, a_2 ]^t
  \otimes
  [ 1, \lambda ]^t
  \otimes
  [ 1, \mu ]^t$
has rank 1.
\end{proof}

\begin{remark}
We now have a partial algorithm for computing the rank of $X$.
If $X$ is the zero array then $X$ has rank 0.
If $X$ is a superdiagonal array then $X$ has rank 2.
If $X$ is nonzero and not superdiagonal, and all of its slices are singular, then $X$ has rank 1.
It remains to consider an array $X$ with a non-singular slice; by permuting the directions and the slices,
we may assume that $X_1$ is non-singular.
\end{remark}

\begin{lemma} \label{lemma4} \cite[p.~632-633]{tenBerge}
The rank of a $2 \times 2 \times 2$ array $X$ is at most 3.
\end{lemma}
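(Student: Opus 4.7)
The plan is to apply Lemma~\ref{lemma1} after first reducing the second frontal slice of $X$ to Jordan canonical form. By the remark preceding the statement, the only case not already settled is that in which some slice of $X$ is non-singular; permuting directions and slices, I may assume that $X_1$ is non-singular. Applying the rank-preserving transformation $(X_1^{-1}, I, I)$ replaces the pair $(X_1, X_2)$ by $(I, X_1^{-1} X_2)$, so after renaming I may assume $X_1 = I$, and I set $M := X_2$. A further transformation $(P, P^{-t}, I)$ then fixes $X_1 = I$ while conjugating $M \mapsto P M P^{-1}$, so $M$ may be placed in Jordan canonical form over $\mathbb{C}$.

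Next I would split into two cases based on the Jordan structure of $M$. If $M$ is diagonalizable, say $M = \operatorname{diag}(d_1, d_2)$, then Lemma~\ref{lemma1} applies immediately with $n = 2$, $A = B = I$, $D = I$, $E = M$, giving rank $\le 2$. If instead $M$ is a non-trivial Jordan block $\lambda I + \mathbf{e}_1 \mathbf{e}_2^t$, I would use Lemma~\ref{lemma1} with $n = 3$: augment $A$ and $B$ by one extra column each, chosen so that the additional rank-one tensor in $A D B^t$ vanishes (via a zero entry in $D$) while the additional rank-one tensor in $A E B^t$ realizes the nilpotent correction $\mathbf{e}_1 \mathbf{e}_2^t$. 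This yields a decomposition of rank at most $3$.

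The main obstacle, and the reason the bound is $3$ rather than $2$, is precisely the non-diagonalizable case: the pair $(X_1, X_2)$ cannot always be simultaneously diagonalized, and when it cannot, an extra rank-one term is genuinely required to absorb the nilpotent part of the Jordan block. Working over $\mathbb{C}$ is essential here, since it guarantees the existence of Jordan canonical form; with that in hand, the construction of $A$, $B$, $D$, $E$ in Lemma~\ref{lemma1} is transparent, and no case analysis beyond the diagonalizable/Jordan-block dichotomy is needed.
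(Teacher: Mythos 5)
Your proposal is correct and would yield a complete proof once the Jordan-block construction is written out explicitly (e.g.\ $A = \begin{bmatrix} 1 & 0 & 1 \\ 0 & 1 & 0 \end{bmatrix}$, $B = \begin{bmatrix} 1 & 0 & 0 \\ 0 & 1 & 1 \end{bmatrix}$, $D = \operatorname{diag}(1,1,0)$, $E = \operatorname{diag}(\lambda,\lambda,1)$ gives $ADB^t = I$ and $AEB^t = \lambda I + \mathbf{e}_1\mathbf{e}_2^t$). However, your route is genuinely different from the paper's. The paper skips any normalization of $X_2$: it sets $Y_2 = X_2 X_1^{-1}$ and writes down, in one shot, explicit $2\times 3$ matrices $A$, $B$ and $3\times 3$ diagonals $D$, $E$ depending polynomially on the entries of $Y_2$ and $X_1$, which by direct multiplication reproduce $X_1$ and $X_2$ simultaneously. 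There is no case split and, notably, no appeal to eigenvalues at all, so the paper's argument is valid verbatim over an arbitrary field. Your approach, by reducing $X_2 X_1^{-1}$ to Jordan canonical form via $(P, P^{-t}, I)$ and then splitting into diagonalizable versus non-trivial Jordan block, is more transparent about \emph{why} the bound is $3$ rather than $2$ and gives the sharper conclusion that the rank is $2$ in the diagonalizable case (matching the later rank-$2$/rank-$3$ dichotomy in Theorem~\ref{theorem222}); the price is that it uses algebraic closure of the base field, which the paper's construction does not require. Both are sound; the paper trades conceptual clarity for a field-agnostic, case-free computation, while you trade generality for an explanation of the mechanism.
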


\begin{proof}
It remains to prove that if the first frontal slice $X_1$ is non-singular, then the rank is at most 3.
We construct an explicit decomposition with $n \le 3$.
We write
  \[
  X_1
  =
  \left[
  \begin{array}{cc}
  x_{11} & x_{12} \\
  x_{21} & x_{22}
  \end{array}
  \right],
  \qquad
  Y_2 = X_2 X_1^{-1}
  =
  \left[
  \begin{array}{cc}
  y_{11} & y_{12} \\
  y_{21} & y_{22}
  \end{array}
  \right].
  \]
Consider the following matrices:
  \begin{alignat*}{2}
  A
  &=
  \left[
  \begin{array}{ccc}
  1 & 0 & y_{12} \\
  0 & 1 & y_{21}
  \end{array}
  \right],
  &\qquad
  B
  &=
  \left[
  \begin{array}{ccc}
  x_{11} & x_{21} & x_{11}+x_{21} \\
  x_{12} & x_{22} & x_{12}+x_{22}
  \end{array}
  \right]
  =
  X_1^t
  \left[
  \begin{array}{ccc}
  1 & 0 & 1 \\
  0 & 1 & 1
  \end{array}
  \right],
  \\
  D
  &=
  \left[
  \begin{array}{ccc}
  1 & 0 & 0 \\
  0 & 1 & 0 \\
  0 & 0 & 0
  \end{array}
  \right],
  &\qquad
  E
  &=
  \left[
  \begin{array}{ccc}
  y_{11}-y_{12} & 0 & 0 \\
  0 & y_{22}-y_{21} & 0 \\
  0 & 0 & 1
  \end{array}
  \right].
  \end{alignat*}
We then verify by direct calculation that
  \begin{align*}
  A D B^t
  &=
  \left[
  \begin{array}{ccc}
  1 & 0 & y_{12} \\
  0 & 1 & y_{21}
  \end{array}
  \right]
  \left[
  \begin{array}{ccc}
  1 & 0 & 0 \\
  0 & 1 & 0 \\
  0 & 0 & 0
  \end{array}
  \right]
  \left[
  \begin{array}{cc}
  1 & 0 \\
  0 & 1 \\
  1 & 1
  \end{array}
  \right]
  X_1
  =
  X_1,
  \\
  A E B^t
  &=
  \left[
  \begin{array}{ccc}
  1 & 0 & y_{12} \\
  0 & 1 & y_{21}
  \end{array}
  \right]
  \left[
  \begin{array}{ccc}
  y_{11}-y_{12} & 0 & 0 \\
  0 & y_{22}-y_{21} & 0 \\
  0 & 0 & 1
  \end{array}
  \right]
  \left[
  \begin{array}{cc}
  1 & 0 \\
  0 & 1 \\
  1 & 1
  \end{array}
  \right]
  X_1
  =
  Y_2 X_1
  =
  X_2.
  \end{align*}
We now apply Lemma \ref{lemma1} to complete the proof.
\end{proof}

\begin{remark}
Lemmas \ref{lemma3} and \ref{lemma4} imply that if $X$ has a non-singular slice then its rank is either 2 or 3.
It remains to distinguish these two cases.
As before, up to permuting the directions and the slices, we may assume that $X_1$ is non-singular.
\end{remark}

\begin{definition}
For a $2 \times 2 \times 2$ array $X$,
\textbf{Cayley's hyperdeterminant}
is the following homogeneous polynomial of degree 4 in the entries $x_{ijk}$:
  \begin{align*}
  \Delta(X)
  &=
  x_{111}^2 x_{222}^2 + x_{112}^2 x_{221}^2 + x_{121}^2 x_{212}^2 + x_{122}^2 x_{211}^2
  \\
  &\quad
  -
  2 \big(
  x_{111} x_{112} x_{221} x_{222}
  + x_{111} x_{121} x_{212} x_{222}
  + x_{111} x_{122} x_{211} x_{222}
  \\
  &\quad\quad\quad
  + x_{112} x_{121} x_{212} x_{221}
  + x_{112} x_{122} x_{211} x_{221}
  + x_{121} x_{122} x_{211} x_{212}
  \big)
  \\
  &\quad
  + 4 \big( x_{111} x_{122} x_{212} x_{221} + x_{112} x_{121} x_{211} x_{222} \big).
  \end{align*}
\end{definition}

\begin{theorem} \label{theorem222} \cite[p.~633-634]{tenBerge}
Let $X$ be a $2 \times 2 \times 2$ array whose first frontal slice $X_1$ is non-singular.
If $X_2$ is a scalar multiple of $X_1$, then $X$ has rank 2.
If $X_2$ is not a scalar multiple of $X_1$, then
(a) if $\Delta(X) \ne 0$ then $X$ has rank 2, and
(b) if $\Delta(X) = 0$ then $X$ has rank 3.
\end{theorem}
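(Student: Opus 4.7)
The plan is to apply Lemma~\ref{lemma1} to translate the rank problem into a question about diagonalizability of the matrix $Y_2 = X_2 X_1^{-1}$ already introduced in Lemma~\ref{lemma4}. Because $X_1$ is non-singular, $X$ has a non-singular slice and in particular is neither zero nor superdiagonal; a rank-1 array would have every slice singular, so Lemmas~\ref{lemma3} and~\ref{lemma4} together pin $\mathrm{rank}(X) \in \{2,3\}$. The entire content of the theorem is therefore the characterization of which case occurs.

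The first step is the equivalence $\mathrm{rank}(X)=2 \iff Y_2$ is diagonalizable over~$\mathbb{C}$. For the forward direction, Lemma~\ref{lemma1} with $n=2$ gives $X_1 = ADB^t$ and $X_2 = AEB^t$ with $A,B$ of size $2\times 2$ and $D,E$ diagonal; since $X_1$ is invertible, so are $A$, $B$, $D$, and therefore $Y_2 = A(ED^{-1})A^{-1}$ is similar to the diagonal matrix $ED^{-1}$. Conversely, if $Y_2 = P\Lambda P^{-1}$ with $\Lambda$ diagonal, then taking $A=P$, $D=I$, $E=\Lambda$, and $B^t = P^{-1}X_1$ yields $X_1=ADB^t$ and $X_2 = Y_2 X_1 = AEB^t$, so Lemma~\ref{lemma1} gives $\mathrm{rank}(X)\le 2$.

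The second step is to express diagonalizability of $Y_2$ in terms of the entries of $X$. The eigenvalues of $Y_2$ are the roots of $\det(X_2 - \lambda X_1) = 0$; direct expansion shows this is a quadratic in $\lambda$ with leading coefficient $\det(X_1)\ne 0$ and constant term $\det(X_2)$. The identity I would verify is that the discriminant of this quadratic equals Cayley's hyperdeterminant $\Delta(X)$. Granting this, the three cases are immediate: (i) if $X_2 = \mu X_1$, then $Y_2 = \mu I$ is trivially diagonalizable, so $\mathrm{rank}(X) = 2$; (ii) if $X_2$ is not a scalar multiple of $X_1$ and $\Delta(X)\ne 0$, the two eigenvalues of $Y_2$ are distinct, hence $Y_2$ is diagonalizable and $\mathrm{rank}(X) = 2$; (iii) if $X_2$ is not a scalar multiple of $X_1$ and $\Delta(X)=0$, then $Y_2$ has a repeated eigenvalue but is not a scalar matrix, hence is not diagonalizable, and by the equivalence combined with Lemma~\ref{lemma4} the rank must be $3$.

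The main obstacle is the polynomial identity equating the discriminant of $\det(X_2 - \lambda X_1)$ with $\Delta(X)$: this is a routine but unpleasant expansion in the eight entries $x_{ijk}$, which must reproduce exactly the four squared monomials, six doubly-negative terms, and two quadruply-positive terms in the definition of $\Delta$. There is no conceptual difficulty, but the signs and coefficients must be tracked carefully.
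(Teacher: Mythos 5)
Your proof is correct, and it reorganizes the argument in a way the paper itself acknowledges (in Example~\ref{example222rank3}) as Ja'Ja''s point of view. The paper's proof, following ten~Berge, works directly with the matrices $X_2 - d_i^{-1}e_i X_1$: starting from the Lemma~\ref{lemma1} data $X_1 = ADB^t$, $X_2 = AEB^t$ with $n=2$, it computes these two combinations explicitly, shows each is rank~$1$ and hence singular, and concludes that $\det(X_2 - \lambda X_1)$ has two distinct roots when $\mathrm{rank}(X)=2$; the converse is an explicit construction from the two rank-one factors $X_2 - \lambda_i X_1$. You instead extract the cleaner invariant statement ``$\mathrm{rank}(X) = 2 \iff Y_2 = X_2 X_1^{-1}$ is diagonalizable'' from the same Lemma~\ref{lemma1} factorization (the observation $Y_2 = A\,(ED^{-1})\,A^{-1}$), and then reduce everything to the multiplicity structure of the characteristic polynomial of $Y_2$. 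This buys a unified treatment: the scalar-multiple case, the distinct-eigenvalue case, and the defective case are all read off from one criterion, whereas the paper handles the scalar-multiple case by a separate ad~hoc decomposition. What it costs is one extra layer of abstraction, and—like the paper—it still rests on the unverified but routine identity that the discriminant of $\det(X_2 - \lambda X_1)$ equals $\Delta(X)$, which you rightly flag as the one computational chore. One small point worth making explicit in a final write-up: in the converse direction of your equivalence, the diagonal matrix $E = \Lambda$ may have a zero entry if $X_2$ is singular, but the corresponding third factor $[d_i, e_i]^t = [1, e_i]^t$ is still a nonzero vector, so the two summands remain genuine simple tensors and Lemma~\ref{lemma1} applies as stated.
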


\begin{proof}
First, assume that $X_2 = \lambda X_1$ for some $\lambda \in \mathbb{C}$.
Since $X_1$ is non-singular, it has rank 2, and hence
$X_1 = \mathbf{a}^{(1)} \otimes \mathbf{b}^{(1)} + \mathbf{a}^{(2)} \otimes \mathbf{b}^{(2)}$.
Writing $\mathbf{c} = [ \, 1, \lambda \, ]^t$ then we see that $X$ has rank 2:
$X = X_1 \otimes \mathbf{c} =
\mathbf{a}^{(1)} \otimes \mathbf{b}^{(1)} \otimes \mathbf{c}
+
\mathbf{a}^{(2)} \otimes \mathbf{b}^{(2)} \otimes \mathbf{c}$.
Second, assume that $X_2$ is not a scalar multiple of $X_1$. We will find a necessary condition for $X$ to
have rank 2. We apply Lemma \ref{lemma1} with $n = 2$ and write
  \[
  A = \left[ \begin{array}{cc} \mathbf{a}^{(1)} & \mathbf{a}^{(2)} \end{array} \right],
  \quad
  B = \left[ \begin{array}{cc} \mathbf{b}^{(1)} & \mathbf{b}^{(2)} \end{array} \right],
  \quad
  D = \left[ \begin{array}{cc} d_1 & 0 \\ 0 & d_2 \end{array} \right],
  \quad
  E = \left[ \begin{array}{cc} e_1 & 0 \\ 0 & e_2 \end{array} \right].
  \]
But $X_1 = A D B^t$, $X_2 = A E B^t$ gives
$X_1 = d_1 \mathbf{a}_1 \mathbf{b}_1^t + d_2 \mathbf{a}_2 \mathbf{b}_2^t$,
$X_2 = e_1 \mathbf{a}_1 \mathbf{b}_1^t + e_2 \mathbf{a}_2 \mathbf{b}_2^t$.
Since $X_1$ is non-singular, it has rank 2, and so $d_1 \ne 0$, $d_2 \ne 0$.
Since $X_2$ is not a scalar multiple of $X_1$, it follows that $E$ is not a scalar multiple of $D$.
Hence $d_1 e_2 - d_2 e_1 \ne 0$,
and so
$X_2 - d_1^{-1} e_1 X_1$, $X_2 - d_2^{-1} e_2 X_1$ are distinct.
We calculate
  \begin{align*}
  &
  X_2 - d_1^{-1} e_1 X_1
  =
  e_1 \mathbf{a}_1 \mathbf{b}_1^t + e_2 \mathbf{a}_2 \mathbf{b}_2^t
  -
  d_1^{-1} e_1
  \big(
  d_1 \mathbf{a}_1 \mathbf{b}_1^t + d_2 \mathbf{a}_2 \mathbf{b}_2^t
  \big)
  \\
  &=
  e_1 \mathbf{a}_1 \mathbf{b}_1^t + e_2 \mathbf{a}_2 \mathbf{b}_2^t
  -
  e_1 \mathbf{a}_1 \mathbf{b}_1^t - d_1^{-1} d_2 e_1 \mathbf{a}_2 \mathbf{b}_2^t
  =
  d_1^{-1} ( d_1 e_2 - d_2 e_1 ) \mathbf{a}_2 \mathbf{b}_2^t,
  \\
  &
  X_2 - d_2^{-1} e_2 X_1
  =
  e_1 \mathbf{a}_1 \mathbf{b}_1^t + e_2 \mathbf{a}_2 \mathbf{b}_2^t
  -
  d_2^{-1} e_2
  \big(
  d_1 \mathbf{a}_1 \mathbf{b}_1^t + d_2 \mathbf{a}_2 \mathbf{b}_2^t
  \big)
  \\
  &=
  e_1 \mathbf{a}_1 \mathbf{b}_1^t + e_2 \mathbf{a}_2 \mathbf{b}_2^t
  -
  d_1 d_2^{-1} e_2 \mathbf{a}_1 \mathbf{b}_1^t - e_2 \mathbf{a}_2 \mathbf{b}_2^t
  =
  - d_2^{-1} ( d_1 e_2 - d_2 e_1 ) \mathbf{a}_1 \mathbf{b}_1^t.
  \end{align*}
It follows that these two matrices are singular.
Hence the quadratic polynomial $\det( X_2 - \lambda X_1 )$ has two distinct roots,
but this determinant is
  \allowdisplaybreaks
  \begin{align*}
  &
  ( x_{111} x_{221} - x_{121} x_{211} ) \lambda^2
  -
  ( x_{111} x_{222} + x_{112} x_{221} - x_{121} x_{212} - x_{122} x_{211} ) \lambda
  \\
  &\quad
  +
  ( x_{112} x_{222} - x_{122} x_{212} ),
  \end{align*}
and the discriminant is $\Delta(X)$.
Thus if $X$ has rank 2 then $\Delta(X) \ne 0$.

Conversely, suppose that $\Delta(X) \ne 0$.
Then $\det( X_2 - \lambda X_1 )$ has two distinct roots, say $\lambda_1$, $\lambda_2$.
We have two nonzero singular matrices $X_2 - \lambda_1 X_1$, $X_2 - \lambda_2 X_1$.
These matrices both have rank 1, and so we can write
  \[
  ( \lambda_1 - \lambda_2 )^{-1}
  ( X_2 - \lambda_2 X_1 )
  =
  \mathbf{u}_1 \mathbf{v}_1^t,
  \qquad
  -( \lambda_1 - \lambda_2 )^{-1}
  ( X_2 - \lambda_1 X_1 )
  =
  \mathbf{u}_2 \mathbf{v}_2^t.
  \]
Then we have
$X_1 = \mathbf{u}_1 \mathbf{v}_1^t + \mathbf{u}_2 \mathbf{v}_2^t$ and
$X_2 = \lambda_1 \mathbf{u}_1 \mathbf{v}_1^t + \lambda_2 \mathbf{u}_2 \mathbf{v}_2^t$,
which imply that
  $X =
  \mathbf{u}_1 \otimes \mathbf{v}_1 \otimes [ 1, \lambda_1 ]^t
  +
  \mathbf{u}_2 \otimes \mathbf{v}_2 \otimes [ 1, \lambda_2 ]^t.
  $
Thus if $\Delta(X) \ne 0$ then $X$ has rank 2.
\end{proof}

\begin{example} \label{example222rank3}
Consider these arrays, where $X$ is the limit as $a \to 0$ of $Y(a)$:
  \[
  X = \left[
  \begin{array}{cc|cc}
  1 & 0 & 0 & 1 \\
  0 & 1 & 0 & 0
  \end{array}
  \right],
  \qquad
  Y\!(a) =
  \left[
  \begin{array}{cc|cc}
  1 & 0 & 0 & 1 \\
  0 & 1 & a^2 & 0
  \end{array}
  \right].
  \]
Clearly $X_1$ is non-singular, and $X_2$ is not a scalar multiple of $X_1$.
But $\Delta(X) = 0$, and so by Theorem \ref{theorem222} the rank of $X$ is 3.
For $Y(a)$, the first frontal slice is non-singular and the second frontal slice is not a scalar multiple of the first,
but $\Delta(Y\!(a)) = 4a^2$ which is nonzero for $a \ne 0$.
Hence if $a \ne 0$ then $Y\!(a)$ has rank 2.
Thus $X$ is the limit of arrays of rank 2, and so the border rank of $X$ is 2.

It follows from Ja'ja' \cite[Lemma 3.1]{JaJa} that an array $[ \, I \, | \, X_2 ]$ has rank 2 if and only if
$X_2$ is similar to a diagonal matrix.
The same paper \cite[Theorem 3.2]{JaJa} implies that if $X_2$ is the companion matrix
of a quadratic polynomial $f(t)$ then $[ \, I \, | \, X_2 ]$ has rank 2 if and only if $f(t)$ has two distinct roots;
otherwise, it has rank 3.
In our example, $X_2$ is the companion matrix of $f(t) = t^2$, so $[ \, I \, | \, X_2 ]$ has rank 3.
This example is the case $n = 2$ of the pair of bilinear forms in the proof of \cite[Theorem 3.5]{JaJa}.
A result of von zur Gathen \cite[Theorem 4]{vonzurGathen} implies that the maximal bilinear complexity of two $2 \times 2$ matrices
over any field is at least 3.
\end{example}


\section{Some lemmas on $3 \times 3 \times 2$ and $3 \times 3 \times 3$ arrays} \label{lemmasection}

Let the $3 \times 3 \times 2$ array over $\mathbb{C}$ have frontal slices $A$ and $B$:
  \[
  [ A | B ]
  =
  \left[
  \begin{array}{ccc|ccc}
  a_{11} & a_{12} & a_{13} & b_{11} & b_{12} & b_{13} \\
  a_{21} & a_{22} & a_{23} & b_{21} & b_{22} & b_{23} \\
  a_{31} & a_{32} & a_{33} & b_{31} & b_{32} & b_{33}
  \end{array}
  \right]
  \]
Ja'Ja' \cite[Corollary 3.4.1]{JaJa} has shown that the rank of a $p \times p \times 2$ array is at most
$\lfloor 3p/2 \rfloor$.
We give an elementary proof of this result in the case $p = 3$.

\begin{lemma} \label{lemma332}
The rank of a $3 \times 3 \times 2$ array is at most 4.
\end{lemma}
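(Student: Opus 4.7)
My plan is to exploit the rank-preserving action of $GL_3 \times GL_3 \times GL_2$ on the pair of frontal slices $(A,B)$ to reduce to a short list of canonical forms, and then verify rank at most $4$ in each case. The fundamental dichotomy is whether the pencil $\lambda A + \mu B$ is \emph{regular}, meaning the polynomial $\det(\lambda A + \mu B)$ in $\lambda,\mu$ is not identically zero, or \emph{singular}.

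In the regular case, some linear combination $\lambda_0 A + \mu_0 B$ is invertible, so after a change of basis in the third direction I may assume $A$ itself is invertible; changes of basis in directions $1$ and $2$ then bring the pair to $(I_3, J)$ with $J$ in Jordan canonical form, and a further shift $J \mapsto J - \alpha I_3$ in direction $3$ normalizes a distinguished block to be nilpotent. I split by the Jordan type of $J$. If $J$ is diagonal, then the analogue of Lemma \ref{lemma1} for $3 \times 3 \times 2$ arrays (taking $n = 3$, with the two $3 \times n$ matrices both equal to $I_3$, and with diagonal matrices $I_3$ and $J$) yields rank at most $3$. If $J$ has Jordan structure $(2,1)$, then after shifting the array splits as $[I_2 \mid N_2] \oplus [I_1 \mid \gamma]$, where $N_2$ is the $2 \times 2$ nilpotent Jordan block; Theorem \ref{theorem222} applies to the first summand (non-singular first slice, second slice not a scalar multiple, and Cayley's hyperdeterminant vanishes because of the repeated eigenvalue) and gives rank $3$, while the second summand has rank $1$, for total $4$. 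If $J$ is a single $3 \times 3$ Jordan block, I shift to the nilpotent case and exhibit an explicit four-term decomposition of $[I_3 \mid N_3]$ of the form
\[
e_1 \otimes e_1 \otimes c_1 + e_2 \otimes (e_2 + e_3) \otimes c_2 + (e_3 - e_2) \otimes e_3 \otimes c_3 + (e_1 - e_2) \otimes e_2 \otimes c_4
\]
for suitable $c_1,\ldots,c_4 \in \mathbb{C}^2$, verified by direct substitution.

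In the singular case, the Kronecker canonical form of the pencil must contain at least one $L_k$ block and one $L_k^T$ block, but the list of canonical forms for $3 \times 3$ singular pencils is short. In every case except $L_1 \oplus L_1^T$, the slices $A$ and $B$ share a common right or left null vector, so after a change of basis I may zero out a common column or row of both slices and reduce to a $3 \times 2 \times 2$ tensor, whose rank is at most $3$. For the exceptional $L_1 \oplus L_1^T$ form, a rank-$4$ decomposition is immediate by inspection from the four nonzero entries of the normal form.

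The main obstacle I expect is the $N_3$ case: because the nilpotent $3 \times 3$ block does not split into smaller pieces, Theorem \ref{theorem222} cannot be invoked, and the four-term decomposition must be produced and verified by a short direct calculation. The singular-pencil analysis, while requiring enumeration, is routine once the Kronecker normal forms are at hand.
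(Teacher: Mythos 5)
Your proof is correct, but it takes a genuinely different route from the paper at the key branching point, so a comparison is worthwhile.

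The paper's dichotomy is on the ranks of the individual frontal slices $A,B$: if both have rank $\le 2$, then writing each as a sum of two rank-one matrices already gives $[A|B]$ as a sum of four simple tensors, and no pencil theory is needed at all; only when one slice has rank $3$ does the paper pass to $[I\,|\,J]$ and run the Jordan-type analysis. Your dichotomy is on regularity of the pencil $\lambda A + \mu B$. The two are not the same: the pencil can be regular even when both $A$ and $B$ are singular (e.g.\ $A=\mathrm{diag}(1,1,0)$, $B=\mathrm{diag}(0,0,1)$), so your ``regular'' branch carries some instances that the paper dispatches trivially, while your ``singular'' branch is a strict subcase of the paper's easy case but is handled with considerably more machinery. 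In the regular branch your treatment matches the paper in spirit: diagonal $J$ gives rank $\le 3$, Jordan type $(2,1)$ gives $3+1=4$ (the paper writes $[I|J]=[I|D]+[O|E_{12}]$, you instead cite Theorem~\ref{theorem222} on the $2\times 2\times 2$ block, both fine), and for a single $3\times3$ nilpotent block you give an explicit four-term decomposition that is different from the paper's but checks out — with $c_1=(1,0)^t$, $c_2=(1,1)^t$, $c_3=(1,0)^t$, $c_4=(0,1)^t$ your formula indeed reproduces $[I_3\,|\,N_3]$. In the singular branch your Kronecker-form enumeration is correct ($L_1\oplus L_1^t$ is the only $3\times3$ singular structure with no zero row or column in normal form), but it is doing much more work than necessary: the moment you know both slices are singular, the paper's two-plus-two decomposition already gives rank $\le 4$ with no appeal to Kronecker theory. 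Note also that the assertion that a $3\times 2\times 2$ tensor has rank $\le 3$ over $\mathbb{C}$ is true but not established anywhere in the paper, and it requires a separate argument (e.g.\ that the determinant restricted to any $3$-dimensional subspace of $M_2(\mathbb{C})$ is a quadratic form of rank $\ge 2$, hence the subspace contains three independent rank-one matrices); fortunately your proof only needs the trivial bound $\mathrm{rank}\le\min(pq,pr,qr)=4$ there, so this overclaim does not damage the argument. The net trade-off: your pencil-regularity split is more systematic and would generalize more cleanly to larger $p\times p\times 2$ arrays, whereas the paper's slice-rank split keeps the proof elementary and avoids importing Kronecker normal form.
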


\begin{proof}
The maximum rank of a $3 \times 3$ matrix is 3.
If both $A$ and $B$ have rank $\le 2$, then it is straightforward to express $[A|B]$ as a sum of $\le 4$ simple tensors.
We have
  \[
  A = \mathbf{a}^{(1)} \otimes \mathbf{b}^{(1)} + \mathbf{a}^{(2)} \otimes \mathbf{b}^{(2)},
  \qquad
  B = \mathbf{a}^{(3)} \otimes \mathbf{b}^{(3)} + \mathbf{a}^{(4)} \otimes \mathbf{b}^{(4)},
  \]
and hence
  \[
  [A|B]
  =
  \mathbf{a}^{(1)} \otimes \mathbf{b}^{(1)} \otimes \mathbf{e}_1 + \mathbf{a}^{(2)} \otimes \mathbf{b}^{(2)} \otimes \mathbf{e}_1
  +
  \mathbf{a}^{(3)} \otimes \mathbf{b}^{(3)} \otimes \mathbf{e}_2 + \mathbf{a}^{(4)} \otimes \mathbf{b}^{(4)} \otimes \mathbf{e}_2.
  \]
We now assume that both $A$ and $B$ have rank $\ge 2$, and that either $A$ or $B$ has rank 3.
Interchanging $A$ and $B$ if necessary, we assume that $A$ has rank 3, so that $A$ is invertible.
Left multiplication of $A$ and $B$ by $A^{-1}$ (that is, applying a change of basis in the first direction)
gives the array $[I|C]$ where the second frontal slice $C = A^{-1}B$ still has rank $\ge 2$.
There exists an invertible matrix $E$ such that $J = E^{-1}CE$ is the Jordan canonical form of $C$.
(Here we use the assumption that the base field is algebraically closed.)
Clearly $E^{-1}IE = I$, so we act on $[I|C]$ by $E^{-1}$ along the first direction and by $E$ along the second direction,
to obtain $[I|J]$, where the second frontal slice $J$ still has rank $\ge 2$.
It remains to show that any such array has rank $\le 4$.
There are three cases for the Jordan canonical form of a $3 \times 3$ matrix $J$.
Case 1: Three $1 \times 1$ Jordan blocks; $J$ is a diagonal matrix:
  \[
  [I|J]
  =
  \left[
  \begin{array}{ccc|ccc}
  1 & 0 & 0 & d_1 &  0  &  0 \\
  0 & 1 & 0 &  0  & d_2 &  0 \\
  0 & 0 & 1 &  0  &  0  & d_3
  \end{array}
  \right]
  \]
Then clearly the rank is $\le 3$:
  \[
  [I|J]
  =
  \mathbf{e}_1 \otimes \mathbf{e}_1 \otimes \left[ \begin{array}{c} 1 \\ d_1 \end{array} \right]
  +
  \mathbf{e}_2 \otimes \mathbf{e}_2 \otimes \left[ \begin{array}{c} 1 \\ d_2 \end{array} \right]
  +
  \mathbf{e}_3 \otimes \mathbf{e}_3 \otimes \left[ \begin{array}{c} 1 \\ d_3 \end{array} \right].
  \]
Case 2:
One $2 \times 2$ block and one $1 \times 1$ block:
  \[
  [I|J]
  =
  \left[
  \begin{array}{ccc|ccc}
  1 & 0 & 0 & d_1 &  1  &  0 \\
  0 & 1 & 0 &  0  & d_1 &  0 \\
  0 & 0 & 1 &  0  &  0  & d_2
  \end{array}
  \right]
  \]
We have $[I|J] = [I|D] + [O|F]$ where $D$ is a diagonal matrix and $F$ is the matrix unit $E_{12}$.
By the previous case, $[I|D]$ has rank $\le 3$, and clearly $[O|F]$ has rank 1.
Case 3:
One $3 \times 3$ block:
  \[
  [I|J]
  =
  \left[
  \begin{array}{ccc|ccc}
  1 & 0 & 0 & d_1 &  1  &  0  \\
  0 & 1 & 0 &  0  & d_1 &  1  \\
  0 & 0 & 1 &  0  &  0  & d_1
  \end{array}
  \right]
  \]
We add $-d_1$ times the first frontal slice to the second frontal slice;
that is, we change basis along the third direction by the matrix
  \[
  \begin{bmatrix}
  1 & 0 \\
  -d_1 & 1
  \end{bmatrix}
  \]
We obtain this array:
  \[
  \left[
  \begin{array}{ccc|ccc}
  1 & 0 & 0 & 0 & 1 & 0  \\
  0 & 1 & 0 & 0 & 0 & 1  \\
  0 & 0 & 1 & 0 & 0 & 0
  \end{array}
  \right]
  \]
It remains to prove that this array has rank $\le 4$.
We have the following explicit representation as a sum of four simple tensors:
  \allowdisplaybreaks
  \begin{align*}
  &
  \left[
  \begin{array}{rrr|rrr}
  1 & 0 & 0 & 0 & 1 & 0  \\
  0 & 1 & 0 & 0 & 0 & 1  \\
  0 & 0 & 1 & 0 & 0 & 0
  \end{array}
  \right]
  \\
  &=
  \left[
  \begin{array}{rrr|rrr}
  0 & 1 & 0 & 0 & 1 & 0 \\
  0 & \frac12 & 0 & 0 & \frac12 & 0 \\
  0 & 0 & 0 & 0 & 0 & 0
  \end{array}
  \right]
  +
  \left[
  \begin{array}{rrr|rrr}
  0 & 0 &  0 & 0 &  0 & 0 \\
  0 & \frac12 & -1 & 0 & -\frac12 & 1 \\
  0 & 0 &  0 & 0 &  0 & 0
  \end{array}
  \right]
  \\
  &\qquad
  +
  \left[
  \begin{array}{rrr|rrr}
  1 & -1 & 0 & 0 & 0 & 0 \\
  0 &  0 & 0 & 0 & 0 & 0 \\
  0 &  0 & 0 & 0 & 0 & 0
  \end{array}
  \right]
  +
  \left[
  \begin{array}{rrr|rrr}
  0 & 0 & 0 & 0 & 0 & 0 \\
  0 & 0 & 1 & 0 & 0 & 0 \\
  0 & 0 & 1 & 0 & 0 & 0
  \end{array}
  \right]
  \\
  &=
  \left[ \begin{array}{r} 1 \\ \frac12 \\ 0 \end{array} \right] \otimes
  \left[ \begin{array}{r} 0 \\ 1 \\ 0 \end{array} \right] \otimes
  \left[ \begin{array}{r} 1 \\ 1 \end{array} \right]
  +
  \left[ \begin{array}{r} 0 \\  1 \\ 0 \end{array} \right] \otimes
  \left[ \begin{array}{r} 0 \\ -\frac12 \\ 1 \end{array} \right] \otimes
  \left[ \begin{array}{r}-1 \\  1 \end{array} \right]
  \\
  &\qquad
  +
  \left[ \begin{array}{r} 1 \\  0 \\ 0 \end{array} \right] \otimes
  \left[ \begin{array}{r} 1 \\ -1 \\ 0 \end{array} \right] \otimes
  \left[ \begin{array}{r} 1 \\  0 \end{array} \right]
  +
  \left[ \begin{array}{r} 0 \\ 1 \\ 1 \end{array} \right] \otimes
  \left[ \begin{array}{r} 0 \\ 0 \\ 1 \end{array} \right] \otimes
  \left[ \begin{array}{r} 1 \\ 0 \end{array} \right].
  \end{align*}
This completes the proof.
\end{proof}

Let the $3 \times 3 \times 3$ array $T$ over $\mathbb{C}$ have frontal slices $A$, $B$ and $C$:
  \[
  T
  =
  [ A | B | C ]
  =
  \left[
  \begin{array}{ccc|ccc|ccc}
  a_{11} & a_{12} & a_{13} & b_{11} & b_{12} & b_{13} & c_{11} & c_{12} & c_{13} \\
  a_{21} & a_{22} & a_{23} & b_{21} & b_{22} & b_{23} & c_{21} & c_{22} & c_{23} \\
  a_{31} & a_{32} & a_{33} & b_{31} & b_{32} & b_{33} & c_{31} & c_{32} & c_{33}
  \end{array}
  \right]
  \]

\begin{lemma} \label{lemma1edge}
\textbf{Kruskal's One-Edge Lemma.}
If the array $T$ has parallel slices $D$ and $E$ for which there exists
a nonzero vector $\mathbf{x}$ such that $D \mathbf{x} = E \mathbf{x} = 0$ or $D^t \mathbf{x} = E^t \mathbf{x} = 0$,
then $\mathrm{rank}(T) \le 5$.
\end{lemma}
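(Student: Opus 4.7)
The overall approach is to exploit the common kernel vector to axis-align an entire $3\times 3$ vertical (or horizontal) slice of $T$ to zero; the remaining array is effectively $3\times 3\times 2$, so Lemma~\ref{lemma332} bounds its rank by $4$, and only one extra simple tensor is needed to compensate for the correction.

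First, by permuting the three directions and then the slices within a direction (all rank-preserving operations), I reduce to the case where $D=A$ and $E=B$ are the first two frontal slices of $T=[A|B|C]$. Suppose initially that $A\mathbf{x}=B\mathbf{x}=0$. A change of basis in the second direction sends $\mathbf{x}$ to $\mathbf{e}_3$, so after this reduction the third column of both $A$ and $B$ is zero. Now write $T=U+V$, where
\[
V \;=\; \mathbf{c}\otimes\mathbf{e}_3\otimes\mathbf{e}_3
\]
with $\mathbf{c}$ the third column of $C$, and $U=T-V$. Then $V$ is a simple tensor, so $\mathrm{rank}(V)\le 1$, and every frontal slice of $U$ has zero third column, so the third vertical slice of $U$ is the zero $3\times 3$ matrix. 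Viewing $U$ as a $3\times 2\times 3$ array and permuting directions, Lemma~\ref{lemma332} yields $\mathrm{rank}(U)\le 4$, whence $\mathrm{rank}(T)\le 5$. The alternative hypothesis $A^t\mathbf{x}=B^t\mathbf{x}=0$ is entirely symmetric: a change of basis in the first direction makes the third row of $A$ and $B$ zero, and the analogous decomposition, with $V$ now isolating the third row of $C$, leaves a remainder with zero third horizontal slice, again of rank at most $4$.

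I do not foresee a serious obstacle. The essential insight is spotting that the common kernel vector allows one to peel off a single rank-one outer product whose support is exactly the intersection of the forced-to-zero vertical (or horizontal) slice with the remaining frontal slice $C$. Once this decomposition is in hand, the rest is routine bookkeeping on top of the $3\times 3\times 2$ bound from Lemma~\ref{lemma332}.
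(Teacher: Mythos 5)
Your proof is correct and takes essentially the same approach as the paper: change basis in the middle direction so the common kernel vector becomes a standard basis vector (the paper uses $\mathbf{e}_1$, you use $\mathbf{e}_3$), peel off the single simple tensor supported on the nonzero column of the third frontal slice, and bound the remainder by $4$ via Lemma~\ref{lemma332} after a permutation of directions. The transposed case is handled symmetrically in both arguments.
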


\begin{proof}
Permuting the directions if necessary, we may assume that $D$ and $E$ are frontal slices.
Permuting the frontal slices if necessary, we may assume that $D$ and $E$ are the first and second frontal slices $A$ and $B$.
Suppose that $A \mathbf{x} = B \mathbf{x} = 0$ where $ \mathbf{x} \ne 0$.
Let $X$ be a $3 \times 3$ non-singular matrix which has $\mathbf{x}$ as its first column.
(Extend the set $\{ \mathbf{x} \}$ to a basis $\{ \mathbf{x}, \mathbf{y}, \mathbf{z} \}$ of $\mathbb{C}^3$ and
let $X = [ \mathbf{x} | \mathbf{y} | \mathbf{z} ]$.)
Acting on $T = [A|B|C]$ by $X$ along the second direction gives $[ AX | BX | CX ]$, but $A \mathbf{x} = B \mathbf{x} = 0$, so
  \allowdisplaybreaks
  \begin{align*}
  &
  [ AX | BX | CX ]
  =
  \left[
  \begin{array}{ccc|ccc|ccc}
  0 &\! a'_{12} &\! a'_{13} &\! 0 &\! b'_{12} &\! b'_{13} &\! c'_{11} &\! c'_{12} &\! c'_{13} \\
  0 &\! a'_{22} &\! a'_{23} &\! 0 &\! b'_{22} &\! b'_{23} &\! c'_{21} &\! c'_{22} &\! c'_{23} \\
  0 &\! a'_{32} &\! a'_{33} &\! 0 &\! b'_{32} &\! b'_{33} &\! c'_{31} &\! c'_{32} &\! c'_{33}
  \end{array}
  \right]
  \\
  &=
  \left[
  \begin{array}{ccc|ccc|ccc}
  0 &\! 0 &\! 0 &\! 0 &\! 0 &\! 0 &\! c'_{11} &\! 0 &\! 0 \\
  0 &\! 0 &\! 0 &\! 0 &\! 0 &\! 0 &\! c'_{21} &\! 0 &\! 0 \\
  0 &\! 0 &\! 0 &\! 0 &\! 0 &\! 0 &\! c'_{31} &\! 0 &\! 0
  \end{array}
  \right]
  +
  \left[
  \begin{array}{ccc|ccc|ccc}
  0 &\! a'_{12} &\! a'_{13} &\! 0 &\! b'_{12} &\! b'_{13} &\! 0 &\! c'_{12} &\! c'_{13} \\
  0 &\! a'_{22} &\! a'_{23} &\! 0 &\! b'_{22} &\! b'_{23} &\! 0 &\! c'_{22} &\! c'_{23} \\
  0 &\! a'_{32} &\! a'_{33} &\! 0 &\! b'_{32} &\! b'_{33} &\! 0 &\! c'_{32} &\! c'_{33}
  \end{array}
  \right]
  \end{align*}
The first term is a simple tensor,
  \[
  \left[ \begin{array}{c} c'_{11} \\ c'_{21} \\ c'_{31} \end{array} \right]
  \otimes
  \left[ \begin{array}{c} 1 \\ 0 \\ 0 \end{array} \right]
  \otimes
  \left[ \begin{array}{c} 0 \\ 0 \\ 1 \end{array} \right],
  \]
and so it remains to prove that the second term has rank $\le 4$.
To write the second term as a sum of simple tensors it suffices to decompose
this $3 \times 2 \times 3$ array:
  \[
  \left[
  \begin{array}{cc|cc|cc}
  a'_{12} & a'_{13} & b'_{12} & b'_{13} & c'_{12} & c'_{13} \\
  a'_{22} & a'_{23} & b'_{22} & b'_{23} & c'_{22} & c'_{23} \\
  a'_{32} & a'_{33} & b'_{32} & b'_{33} & c'_{32} & c'_{33}
  \end{array}
  \right]
  \]
Transposing the second and third directions, we may consider the $3 \times 3 \times 2$ array,
  \[
  \left[
  \begin{array}{ccc|ccc}
  a'_{12} & b'_{12} & c'_{12} & a'_{13} & b'_{13} & c'_{13} \\
  a'_{22} & b'_{22} & c'_{22} & a'_{23} & b'_{23} & c'_{23} \\
  a'_{32} & b'_{32} & c'_{32} & a'_{33} & b'_{33} & c'_{33}
  \end{array}
  \right]
  \]
The claim now follows from Lemma \ref{lemma332}.

If $A^t \mathbf{x} = B^t \mathbf{x} = 0$, then we transpose the matrices $A$, $B$ and $C$ and use the analogous reasoning;
this can also be expressed in terms of a transposition of the first two directions in the array $T = [A|B|C]$.
\end{proof}

\begin{lemma} \label{lemma2edge}
\textbf{Kruskal's Two-Edge Lemma.}
If the array $T$ has frontal slices $D$ and $E$ for which there exist
nonzero vectors $\mathbf{x}$ and $\mathbf{y}$ such that $D \mathbf{x} = \mathbf{y}^t D = 0$
and $\mathbf{y}^t E \mathbf{x} \ne 0$, then $\mathrm{rank}(T) \le 5$.
\end{lemma}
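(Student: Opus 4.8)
The plan is to reduce, by rank-preserving transformations, to a very rigid normal form and then split off three ``diagonal'' simple tensors plus a rank-two remainder. First I would permute the directions and frontal slices so that $D=A$ and $E=B$ are the first and second frontal slices, and then act by invertible matrices in directions $1$ and $2$ carrying $\mathbf{x}$ and $\mathbf{y}$ to $\mathbf{e}_1$; since $A\mathbf{x}=\mathbf{y}^tA=0$ this makes the first row and first column of $A$ vanish, so $A=\operatorname{diag}(0,A')$ with $A'$ a $2\times 2$ matrix, while $\mathbf{y}^tB\mathbf{x}$ becomes the $(1,1)$-entry of $B$, which is nonzero. If $\operatorname{rank}(A)\le 1$ the lemma is immediate: $[A\mid O\mid O]$ is a sum of at most one simple tensor, and $[O\mid B\mid C]$, having a zero slice, is (up to a permutation of direction $3$) a $3\times 3\times 2$ array, hence of rank $\le 4$ by Lemma~\ref{lemma332}; so $\operatorname{rank}(T)\le 5$. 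Thus I may assume $\operatorname{rank}(A)=2$, i.e. $A'$ is invertible.

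Next I would analyze the pencil $\{A,E'\}$ as $E'$ ranges over the span $\mathcal{N}$ of the three frontal slices. For $A=\operatorname{diag}(0,A')$ one has $\operatorname{adj}(A)=\det(A')\,\mathbf{e}_1\mathbf{e}_1^t$, so by Jacobi's formula the constant and linear coefficients (in $\mu$) of $\det(A-\mu E')$ are $\det(A)=0$ and $-\det(A')\,\mathbf{y}^tE'\mathbf{x}$. Hence whenever $\mathbf{y}^tE'\mathbf{x}\neq 0$ the generalized eigenvalue $\mu=0$ of the pencil is simple, and $E'^{-1}A$ (once $E'$ is invertible) has $0$ as a simple eigenvalue; its only possible obstruction to being diagonalizable is that the two remaining eigenvalues coincide in a nontrivial Jordan block, which confines the ``bad'' $E'$ to the zero locus of the discriminant of the residual quadratic factor of $\det(A-\mu E')$. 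Together with the proper loci $\{\det E'=0\}$ and $\{\mathbf{y}^tE'\mathbf{x}=0\}$ in $\mathcal{N}$ (neither of which is all of $\mathcal{N}$, since $\det$ is not identically zero on $\mathcal{N}$ and $\mathbf{y}^tB\mathbf{x}\neq 0$), I would select $E'\in\mathcal{N}$ outside all three; after a change of basis in direction $3$ making $E'$ the second frontal slice (keeping $A$ as the first), clearing the first row and column of $E'$ as before (this does not disturb $A$), and acting by $(E')^{-1}$ in direction $1$, the array becomes $[\,M\mid I\mid N\,]$ with $M=(E')^{-1}A$ diagonalizable of rank $2$ (with $\mathbf{e}_1$ in its kernel) and $N$ arbitrary.

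Finally I would finish as follows. Conjugating all slices (direction $1$ by $S$, direction $2$ by $S^{-t}$, which fixes the slice $I$) puts $M$ in the form $\operatorname{diag}(0,\mu_1,\mu_2)$. A change of basis in direction $3$ replacing the third slice $N$ by $N-\beta I$, with $\beta$ an eigenvalue of $N$, makes $\det N=0$, so $\operatorname{rank}(N)\le 2$. Then
\[
[\,M\mid I\mid N\,]\;=\;[\,M\mid I\mid O\,]\;+\;[\,O\mid O\mid N\,],
\]
where $[\,M\mid I\mid O\,]$ is (up to direction $3$) the $3\times 3\times 2$ array with the two simultaneously diagonal slices $M$ and $I$, hence equals $\sum_{i=1}^{3}\mathbf{e}_i\otimes\mathbf{e}_i\otimes(\mu_i',1,0)$ and has rank $\le 3$, while $[\,O\mid O\mid N\,]=N\otimes\mathbf{e}_3$ has rank $\le 2$. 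Therefore $\operatorname{rank}(T)\le 5$.

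The step I expect to be the main obstacle is the selection in the middle paragraph: namely verifying cleanly that the net $\mathcal{N}$ is not entirely contained in the ``non-diagonalizable quotient'' locus --- i.e. that a diagonalizable $M$ can always be reached. In the (a priori possible) degenerate situation where every admissible $E'$ yields a nontrivial Jordan block, the reduction to $[\,M\mid I\mid N\,]$ with $M$ diagonal is unavailable, and one must instead produce an explicit decomposition into five simple tensors in the spirit of the computations in the proof of Lemma~\ref{lemma332}, or else exhibit a common kernel vector of two slices and invoke Kruskal's One-Edge Lemma~\ref{lemma1edge}.
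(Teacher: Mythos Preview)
Your route is genuinely different from the paper's, and the gap you yourself flag in the middle paragraph is real and not merely cosmetic. Nothing in the hypotheses prevents the discriminant of the residual quadratic factor of $\det(A-\mu E')$ from vanishing identically on the line through $A$ and $B$: for instance with $A=\operatorname{diag}(0,1,1)$ and $B$ whose lower $2\times 2$ block is a unipotent Jordan block, every $E'=A+tB$ with $t\ne 0,-1$ gives $(E')^{-1}A$ with a nontrivial $2\times 2$ Jordan block. You then need the third slice $C$ to break this degeneracy, and you have no argument that it must; if it does not, your reduction to a diagonal $M$ collapses, and the fallback you sketch (an ad hoc five-term decomposition, or an appeal to Lemma~\ref{lemma1edge}) is not supplied. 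A minor imprecision: ``clearing the first row and column of $E'$'' cannot zero out the $(1,1)$ entry; what you can legitimately do, and presumably intend, is use that nonzero pivot to eliminate the remaining entries of row~$1$ and column~$1$ of $E'$ by row/column operations that fix $A$.

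The paper's argument bypasses all of this with a much shorter construction that needs no diagonalization, no genericity, and no case split on $\operatorname{rank}(A)$. After the same initial normalization (so that $V^tAU$ has zero first row and column, and $V^tBU$ has nonzero $(1,1)$ entry $\alpha$), one adds $B$ to $C$ if necessary so that $V^tCU$ also has nonzero $(1,1)$ entry. Then one subtracts from $V^tBU$ the rank-$1$ matrix $\alpha^{-1}(\text{first column of }V^tBU)(\text{first row of }V^tBU)$, which by construction agrees with $V^tBU$ on its first row and first column; the difference therefore has zero first row and column. Doing the same for $V^tCU$, one has spent two simple tensors and is left with an array all three of whose frontal slices have vanishing first row and column --- effectively a $2\times 2\times 2$ array, of rank at most $3$ by the results of Section~\ref{tenBergesection}. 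Hence $\operatorname{rank}(T)\le 2+3=5$.
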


\begin{proof}
As before, we may assume that $D$ and $E$ are the first and second frontal slices $A$ and $B$.
Let $\mathbf{x}$ and $\mathbf{y}$ satisfy the conditions of the lemma.
We choose vectors $\mathbf{u}_2$, $\mathbf{u}_3$, $\mathbf{v}_2$, $\mathbf{v}_3$ such that
$U = [ \, \mathbf{x} | \mathbf{u}_2 | \mathbf{u}_3 \, ]$ and $V = [ \, \mathbf{y} | \mathbf{v}_2 | \mathbf{v}_3 \, ]$ are nonsingular.
Then
  \[
  V^t A U = \begin{bmatrix} 0 & 0 & 0 \\ 0 & \ast & \ast \\ 0 & \ast & \ast \end{bmatrix},
  \quad
  V^t B U = \begin{bmatrix} \alpha & \ast & \ast \\ \ast & \ast & \ast \\ \ast & \ast & \ast \end{bmatrix},
  \quad \text{and} \quad
  V^t C U = \begin{bmatrix} \beta & \ast & \ast \\ \ast & \ast & \ast \\ \ast & \ast & \ast \end{bmatrix},
  \]
where $\alpha = \mathbf{y}^t B \mathbf{x} \ne 0$, but $\beta$ can be 0, and $\ast$ denotes unspecified elements (which are not necessarily equal).
If $\beta = 0$ then we add $B$ to $C$ to make $\beta = \alpha \ne 0$.
Equivalently, we change basis along the third direction in $T$ by the matrix
  \[
  \begin{bmatrix}
  1 & 0 & 0 \\
  0 & 1 & 0 \\
  0 & 1 & 1
  \end{bmatrix}
  \]
Since $\alpha \ne 0$, we can construct a matrix $X$ of rank 1 which has the same first row and first column as $V^t B U$;
explicitly,
  \[
  V^t B U = \begin{bmatrix} \alpha & \alpha' & \alpha'' \\ \gamma & \ast & \ast \\ \delta & \ast & \ast \end{bmatrix},
  \qquad
  X = \begin{bmatrix}
  \alpha & \alpha' & \alpha'' \\
  \frac{\gamma}{\alpha}\alpha & \frac{\gamma}{\alpha}\alpha' & \frac{\gamma}{\alpha}\alpha'' \\[4pt]
  \frac{\delta}{\alpha}\alpha & \frac{\delta}{\alpha}\alpha' & \frac{\delta}{\alpha}\alpha''
  \end{bmatrix}
  \]
Similarly, we can construct a matrix $Y$ of rank 1 which has the same first row and first column as $V^t C U$.
Then the two arrays $[0|X|0]$ and $[0|Y|0]$ also have rank 1 as $3 \times 3 \times 3$ arrays;
that is, they are simple tensors.
We now see that
  \[
  V^t [A|B|C] U - [0|X|0] - [0|Y|0]
  =
  \left[
  \begin{array}{ccc|ccc|ccc}
  0 & 0 & 0 & 0 & 0 & 0 & 0 & 0 & 0 \\
  0 & \ast & \ast & 0 & \ast & \ast & 0 & \ast & \ast \\
  0 & \ast & \ast & 0 & \ast & \ast & 0 & \ast & \ast
  \end{array}
  \right]
  \]
It remains to decompose a $2 \times 2 \times 2$ array, and this requires at most three simple tensors
according to the results of Section \ref{tenBergesection}.
\end{proof}


\section{Proof of Kruskal's theorem on $3 \times 3 \times 3$ arrays} \label{Kruskalsection}

\begin{theorem}
Every $3 \times 3 \times 3$ array $T = [A|B|C]$ over $\mathbb{C}$ has rank $\le 5$.
\end{theorem}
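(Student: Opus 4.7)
The plan is to combine the initial reductions with the One-Edge and Two-Edge Lemmas via case analysis on the ranks of the slices. First I would dispose of the case where some slice in some direction has rank at most $1$. If a slice is zero, the remaining array is $3\times 3\times 2$ and has rank at most $4$ by Lemma~\ref{lemma332}, so $\mathrm{rank}(T)\le 4$. If a slice has rank $1$ it is a simple tensor, and subtracting it leaves a $3\times 3\times 2$ array of rank at most $4$, so $\mathrm{rank}(T)\le 5$. Hence I may assume every frontal, horizontal, and vertical slice has rank exactly $2$ or $3$.

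Next I would locate a rank-$2$ frontal slice inside the pencil by a change of basis in the third direction. If $A$, $B$, $C$ are linearly dependent, some combination vanishes and the previous reduction applies; so assume they are linearly independent, making $f(\lambda,\mu,\nu)=\det(\lambda A+\mu B+\nu C)$ a nonzero cubic form. Since $\mathbb{C}$ is algebraically closed, $f$ has a nontrivial zero. Using this zero to change basis in the third direction, I may assume the first frontal slice $A$ is singular; by the previous step, $\mathrm{rank}(A)=2$. Let $\mathbf{x}$ span $\ker A$ and $\mathbf{y}$ span $\ker A^t$.

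Now I would split into subcases based on how $B$ and $C$ interact with $\mathbf{x}$ and $\mathbf{y}$. If $B\mathbf{x}$ and $C\mathbf{x}$ are linearly dependent, some nonzero combination of $B$ and $C$ also kills $\mathbf{x}$; after a further basis change in the third direction, the first two frontal slices share the right kernel $\mathbf{x}$ and Lemma~\ref{lemma1edge} yields $\mathrm{rank}(T)\le 5$. The mirror argument handles the case in which $\mathbf{y}^t B$ and $\mathbf{y}^t C$ are linearly dependent. In the remaining case both pairs are independent; if $\mathbf{y}^t B\mathbf{x}\ne 0$ or $\mathbf{y}^t C\mathbf{x}\ne 0$, then Lemma~\ref{lemma2edge} applied to $(A,B)$ or to $(A,C)$ gives the bound.

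The stubborn subcase is the one where $B\mathbf{x}, C\mathbf{x}$ are independent, $\mathbf{y}^t B, \mathbf{y}^t C$ are independent, and $\mathbf{y}^t B\mathbf{x}=\mathbf{y}^t C\mathbf{x}=0$. Choosing coordinates with $\mathbf{x}=\mathbf{e}_1$ and $\mathbf{y}=\mathbf{e}_3$ forces many zero entries in each slice and leaves the array in a rigid normal form. I expect this to be the principal obstacle. My first attempt to resolve it would be to exploit the fact that the cubic curve $\{f=0\}\subset\mathbb{P}^2$ contains infinitely many points: a different rank-$2$ slice in the pencil yields a different pair $(\mathbf{x}',\mathbf{y}')$, and generically at least one such choice should avoid the degenerate configuration above. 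Failing that, I would pass to horizontal or vertical slices (whose analogous cubic forms must also vanish somewhere) and repeat the analysis, or else exhibit an explicit decomposition of the normal form as a sum of at most five simple tensors, in the spirit of the final calculation in the proof of Lemma~\ref{lemma332}.
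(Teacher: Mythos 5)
Your overall strategy — dispose of low-rank slices, use changes of basis in the third direction to make a frontal slice singular, then apply the One-Edge and Two-Edge Lemmas — follows the same skeleton as the paper's proof. Two remarks on the setup. First, linear independence of $A$, $B$, $C$ does \emph{not} imply $f(\lambda,\mu,\nu)=\det(\lambda A+\mu B+\nu C)$ is a nonzero cubic (take $A=E_{12}$, $B=E_{13}$, $C=E_{23}$: these are independent but every combination is strictly upper triangular, so $f\equiv 0$); the right statement is that either $f\equiv 0$, in which case \emph{every} combination is singular, or $f\not\equiv 0$, and then $\mathbb{C}$ being algebraically closed gives a nontrivial zero. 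Either way you get a singular slice, so this is a fixable slip. Second, and more importantly, you work with only \emph{one} singular frontal slice $A$. The paper first arranges that \emph{both} $A$ and $B$ are singular (using the two pencils $\det(A-\lambda C)$ and $\det(B-\lambda C)$), and then splits on whether $C$ can also be made rank $2$. That extra structure — two right kernel vectors $\mathbf{x}_1,\mathbf{x}_2$ and two left kernel vectors $\mathbf{y}_1,\mathbf{y}_2$, possibly a third — is exactly what drives the later case analysis (on the rank of $X=[\mathbf{x}_1|\mathbf{x}_2|\mathbf{x}_3]$ and $Y=[\mathbf{y}_1|\mathbf{y}_2|\mathbf{y}_3]$). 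Working with a single kernel pair $(\mathbf{x},\mathbf{y})$ leaves a residual configuration that is genuinely hard to attack directly.

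The genuine gap is the unresolved ``stubborn'' subcase. You correctly identify it as the principal obstacle, but none of your three suggested escape routes is shown to work, and the normal form you describe does not obviously decompose in $\le 5$ terms: after zeroing the first column of every frontal slice by subtracting two simple tensors (using $b_{31}=c_{31}=0$), you are left with a general $3\times 2\times 3$ array, and Lemma~\ref{lemma332} only bounds that by $4$, giving $\le 6$, not $\le 5$. In the paper, resolving this configuration requires a much more elaborate argument: Subcases 1.2, 1.3 hinge on extra vanishing forced by $C\mathbf{x}_3=0$ and the observations in \eqref{case1eq1}--\eqref{case1eq2}; Subcase 2.2 needs $\alpha A+\beta B+C$ of rank $3$ for all $\alpha,\beta$; and the final Subcase 2.2.2 closes with a nonobvious trick — scale to make a frontal slice Hermitian so it is diagonalizable, reducing the residual $3\times3\times3$ piece to rank $3$. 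Nothing in your sketch anticipates that level of structure. So the proposal is a correct and well-oriented reduction, but as written it stops short of a proof precisely where the substance lies.
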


\begin{proof}
If $C = 0$ then the problem reduces to considering a $3 \times 3 \times 2$ array, which has rank $\le 4$
by Lemma \ref{lemma332}.
We assume from now on that $C \ne 0$.
Consider the $3 \times 3$ matrix $A - \lambda C$; its determinant is a nonconstant polynomial in $\lambda$,
which has a root over $\mathbb{C}$.
(Here again we use the assumption that the base field is algebraically closed.)
Thus by subtracting a multiple of $C$ from $A$, we may ensure that $A$ is singular, and so
$\mathrm{rank}(A) \le 2$.
Equivalently, we change basis along the third direction in $T$ by the matrix
  \[
  \begin{bmatrix}
  1 & 0 & -\lambda \\
  0 & 1 & 0 \\
  0 & 0 & 1
  \end{bmatrix}
  \]
The same considerations apply to $B$.
We assume from now on that the first and second frontal slices of $T$ both have rank $\le 2$.

Suppose that some frontal slice has rank $\le 1$; up to permuting these slices, we may assume
that $\mathrm{rank}(A) \le 1$.
If $\mathrm{rank}(A) = 0$, then $A$ is the 0 matrix, and we have a $3 \times 3 \times 2$ array, which has rank $\le 4$
by Lemma \ref{lemma332}.
If $\mathrm{rank}(A) = 1$, then the array $[A|0|0]$ has rank 1;
subtracting this simple tensor from $T$ leaves a $3 \times 3 \times 2$ array which has rank $\le 4$,
and so $T$ has rank $\le 5$.

We may now assume that $A$ and $B$ have rank 2, and that $C$ has rank 2 or 3.

\smallskip

\textsc{Case 1:}
$A$, $B$ and $C$ all have rank 2.
It follows that there exist nonzero vectors $\mathbf{x}_1, \mathbf{x}_2, \mathbf{x}_3$ and $\mathbf{y}_1, \mathbf{y}_2, \mathbf{y}_3$
(basis vectors for the right and left nullspaces) such that
  \[
  A \mathbf{x}_1 = B \mathbf{x}_2 = C \mathbf{x}_3 = 0,
  \qquad
  \mathbf{y}_1^t A = \mathbf{y}_2^t B = \mathbf{y}_3^t C = 0.
  \]
Then for the $3 \times 3$ matrices $X = [ \mathbf{x}_1 | \mathbf{x}_2 | \mathbf{x}_3 ]$ and $Y = [ \mathbf{y}_1 | \mathbf{y}_2 | \mathbf{y}_3 ]$ we have
  \begin{equation} \label{case1eq1}
  Y^t A X = \begin{bmatrix} 0 & 0 & 0 \\ 0 & \ast & \ast \\ 0 & \ast & \ast \end{bmatrix},
  \quad
  Y^t B X = \begin{bmatrix} \ast & 0 & \ast \\ 0 & 0 & 0 \\ \ast & 0 & \ast \end{bmatrix},
  \quad
  Y^t C X = \begin{bmatrix} \ast & \ast & 0 \\ \ast & \ast & 0 \\ 0 & 0 & 0 \end{bmatrix}.
  \end{equation}
If the conditions of Lemma \ref{lemma2edge} are satisfied for any two frontal slices, then the proof is complete.
Otherwise, it follows that
  \begin{equation} \label{case1eq2}
  \mathbf{y}_i^t A \mathbf{x}_i = \mathbf{y}_i^t B \mathbf{x}_i = \mathbf{y}_i^t C \mathbf{x}_i = 0,
  \;
  \text{for all $i = 1, 2, 3$}.
  \end{equation}
Consider these three subcases:

\emph{Subcase 1.1:}
Two columns of $X$ are linearly dependent (that is, one column is a scalar multiple of another).
Then Lemma \ref{lemma1edge} completes the proof.

\emph{Subcase 1.2:}
The matrix $X$ has rank 2, but no two columns are linearly dependent.
Then $\mathbf{x}_1$ and $\mathbf{x}_2$ are linearly independent, and so $\mathbf{x}_3 = \beta \mathbf{x}_1 + \gamma \mathbf{x}_2$
for some $\beta, \gamma \in \mathbb{C} \setminus \{0\}$.
We choose vectors $\mathbf{u}, \mathbf{v}_2, \mathbf{v}_3$ such that the matrices
$U = [ \mathbf{x}_1 | \mathbf{x}_2 | \mathbf{u} ]$ and $V = [ \mathbf{y}_1 | \mathbf{v}_2 | \mathbf{v}_3 ]$
are invertible.
Then for some $\delta \in \mathbb{C}$ we have
  \[
  V^t A U = \begin{bmatrix} 0 & 0 & 0 \\ 0 & \ast & \ast \\ 0 & \ast & \ast \end{bmatrix},
  \qquad
  V^t B U = \begin{bmatrix} 0 & 0 & \ast \\ \ast & 0 & \ast \\ \ast & 0 & \ast \end{bmatrix},
  \qquad
  V^t C U = \begin{bmatrix} 0 & \delta & \ast \\ \ast & \ast & \ast \\ \ast & \ast & \ast \end{bmatrix}.
  \]
(The $(1,1)$ entries of $V^t B U$ and $V^t C U$ are zero; otherwise Lemma \ref{lemma2edge} would apply.)
But $C \mathbf{x}_3 = 0$ implies $\beta C \mathbf{x}_1 + \gamma C \mathbf{x}_2 = 0$, and so the first two columns of $V^t C U$ are linearly dependent,
implying $\delta = 0$.
Hence the first three rows of $V^t A U$, $V^t B U$ and $V^t C U$ are linearly dependent.
We subtract the simple tensor in which the first horizontal slice is the same as that of $V^t [A|B|C] U$ and
the second and third horizontal slices are zero.
There remains an array in which the first horizontal slice is zero, and the second and third horizontal slices are
the same as those of $V^t [A|B|C] U$.
But the rank of this $2 \times 3 \times 3$ array is at most 4 by Lemma \ref{lemma332}.

\emph{Subcase 1.3:}
The matrix $X$ has rank 3.
If $Y$ has rank $\le 2$, then we replace each frontal slice $A$, $B$, $C$ by its transpose
(equivalently, we interchange the first two directions of $T$), and then we may apply one of the previous subcases.
So we assume that $Y$ has rank 3.
Combining \eqref{case1eq1} and \eqref{case1eq2} gives three matrices of rank 2:
  \[
  Y^t A X = \begin{bmatrix} 0 & 0 & 0 \\ 0 & 0 & \beta \\ 0 & \alpha & 0 \end{bmatrix},
  \quad
  Y^t B X = \begin{bmatrix} 0 & 0 & \delta \\ 0 & 0 & 0 \\ \gamma & 0 & 0 \end{bmatrix},
  \quad
  Y^t C X = \begin{bmatrix} 0 & \zeta & 0 \\ \epsilon & 0 & 0 \\ 0 & 0 & 0 \end{bmatrix}.
  \]
If we transpose the first two columns of $X$ (that is, interchange the first two vertical slices of $T$) then we obtain
  \[
  Y^t A X = \begin{bmatrix} 0 & 0 & 0 \\ 0 & 0 & \beta \\ \alpha & 0 & 0 \end{bmatrix},
  \quad
  Y^t B X = \begin{bmatrix} 0 & 0 & \delta \\ 0 & 0 & 0 \\ 0 & \gamma & 0 \end{bmatrix},
  \quad
  Y^t C X = \begin{bmatrix} \zeta & 0 & 0 \\ 0 & \epsilon & 0 \\ 0 & 0 & 0 \end{bmatrix}.
  \]
We subtract from $Y^t [A|B|C] X$ the following sum of three simple tensors:
  \allowdisplaybreaks
  \begin{align*}
  &
  \left[
  \begin{array}{rrr|rrr|rrr}
  0 &\!\!\! 0 &\!\!\! 0 & 0 & 0 & 0 & 0 & 0 & 0 \\
  0 &\!\!\! 0 &\!\!\! 0 & 0 & 0 & 0 & 0 & 0 & 0 \\
  \alpha &\!\!\! -\beta &\!\!\! -\beta & 0 & 0 & 0 & 0 & 0 & 0
  \end{array}
  \right]
  +
  \\
  &
  \left[
  \begin{array}{rrr|rrr|rrr}
  0 & 0 & 0 & 0 & 0 &\!\!\! \delta & 0 & 0 & 0 \\
  0 & 0 & 0 & 0 & 0 &\!\!\! -\gamma & 0 & 0 & 0 \\
  0 & 0 & 0 & 0 & 0 &\!\!\! -\gamma & 0 & 0 & 0
  \end{array}
  \right]
  +
  \left[
  \begin{array}{rrr|rrr|rrr}
  0 & 0 & 0 & 0 & 0 & 0 & \zeta & 0 & 0 \\
  0 & 0 & 0 & 0 & 0 & 0 & 0 & 0 & 0 \\
  0 & 0 & 0 & 0 & 0 & 0 & 0 & 0 & 0
  \end{array}
  \right]
  \end{align*}
We obtain the following array:
  \begin{equation}
  \label{result1}
  \left[
  \begin{array}{rrr|rrr|rrr}
  0 &     0 &     0 & 0 &      0 &      0 & 0 & 0        & 0 \\
  0 &     0 & \beta & 0 &      0 & \gamma & 0 & \epsilon & 0 \\
  0 & \beta & \beta & 0 & \gamma & \gamma & 0 & 0        & 0
  \end{array}
  \right]
  \end{equation}
The three frontal slices are linear combinations of these two matrices of rank 1:
  \[
  \begin{bmatrix} 0 & 0 & 0 \\ 0 & 1 & 1 \\ 0 & 1 & 1 \end{bmatrix},
  \qquad
  \begin{bmatrix} 0 & 0 & 0 \\ 0 & 1 & 0 \\ 0 & 0 & 0 \end{bmatrix}.
  \]
Therefore \eqref{result1} is the sum of two simple tensors:
  \[
  \left[
  \begin{array}{rrr|rrr|rrr}
  0 & 0     & 0     & 0 & 0      & 0      & 0 & 0 & 0 \\
  0 & \beta & \beta & 0 & \gamma & \gamma & 0 & 0 & 0 \\
  0 & \beta & \beta & 0 & \gamma & \gamma & 0 & 0 & 0
  \end{array}
  \right]
  +
  \left[
  \begin{array}{rrr|rrr|rrr}
  0 &\!\!\! 0      & 0 & 0 &\!\!\! 0       & 0 & 0 & 0 & 0 \\
  0 &\!\!\! -\beta & 0 & 0 &\!\!\! -\gamma & 0 & 0 & \epsilon & 0 \\
  0 &\!\!\! 0      & 0 & 0 &\!\!\! 0       & 0 & 0 & 0 & 0
  \end{array}
  \right]
  \]
We now have a decomposition of the original $3 \times 3 \times 3$ array $T$ into a sum of
at most 5 simple tensors.

\smallskip

\textsc{Case 2:}
$A$ and $B$ have rank 2 but $C$ has rank 3.

\emph{Subcase 2.1:}
There exist $\alpha, \beta \in \mathbb{C}$ such that $\alpha A + \beta B + C$ has rank 2.
This corresponds to changing basis in $T$ along the third direction by the matrix
  \[
  \begin{bmatrix}
  1 & 0 & 0 \\
  0 & 1 & 0 \\
  \alpha & \beta & 1
  \end{bmatrix}
  \]
Then we are back in Case 1.
Such scalars may not exist; a simple example is
  \[
  A = B = \begin{bmatrix} 0 & 1 & 0 \\ 0 & 0 & 1 \\ 0 & 0 & 0 \end{bmatrix},
  \qquad
  C = \begin{bmatrix} 1 & 0 & 0 \\ 0 & 1 & 0 \\ 0 & 0 & 1 \end{bmatrix}.
  \]

\emph{Subcase 2.2:}
The matrix $\alpha A + \beta B + C$ has rank 3 for all $\alpha, \beta \in \mathbb{C}$.
There exist nonzero vectors $\mathbf{x}_1, \mathbf{x}_2, \mathbf{x}_3$ and $\mathbf{y}_1, \mathbf{y}_2, \mathbf{y}_3$ such that
  \[
  A \mathbf{x}_1 = B \mathbf{x}_2 = 0,
  \qquad
  C \mathbf{x}_3 = A \mathbf{x}_2,
  \qquad
  \mathbf{y}_1^t A = \mathbf{y}_2^t B = 0,
  \qquad
  \mathbf{y}_3^t C = \mathbf{y}_2^t A.
  \]
If we can apply Lemma \ref{lemma2edge}, then we are done.
So we may assume that Lemma \ref{lemma2edge} does not apply, and hence we must have
  \[
  \mathbf{y}_1^t A \mathbf{x}_1 = \mathbf{y}_1^t B \mathbf{x}_1 = \mathbf{y}_1^t C \mathbf{x}_1 = 0,
  \qquad
  \mathbf{y}_2^t A \mathbf{x}_2 = \mathbf{y}_2^t B \mathbf{x}_2 = \mathbf{y}_2^t C \mathbf{x}_2 = 0.
  \]
We write $X = [\mathbf{x}_1|\mathbf{x}_2|\mathbf{x}_3]$ and $Y = [\mathbf{y}_1|\mathbf{y}_2|\mathbf{y}_3]$.

\emph{Subcase 2.2.1:}
We have linear dependence of $\mathbf{x}_1$ and $\mathbf{x}_2$, or of $\mathbf{y}_1$ and $\mathbf{y}_2$, or both.
Then the result follows from Lemma \ref{lemma1edge}.

\emph{Subcase 2.2.2:}
We have linear independence of $\mathbf{x}_1$ and $\mathbf{x}_2$, and of $\mathbf{y}_1$ and $\mathbf{y}_2$.
Then both matrices $X$ and $Y$ have rank $\ge 2$.

Assume $X$ has rank 2.
Then $\mathbf{x}_3 = \gamma \mathbf{x}_1 + \delta \mathbf{x}_2$ for some $\gamma, \delta \in \mathbb{C}$.
There exist nonzero vectors $\mathbf{u}$, $\mathbf{v}$ such that
$U = [\mathbf{x}_1|\mathbf{x}_2|\mathbf{u}]$, $V = [\mathbf{y}_1|\mathbf{y}_2|\mathbf{v}]$ both have rank 3.
Then using the previous equations we have
  \[
  V^t A U = \begin{bmatrix} 0 & 0 & 0 \\ 0 & 0 & \ast \\ 0 & \ast & \ast \end{bmatrix},
  \qquad
  V^t B U = \begin{bmatrix} 0 & 0 & \zeta \\ 0 & 0 & 0 \\ \epsilon & 0 & \ast \end{bmatrix},
  \qquad
  V^t C U = \begin{bmatrix} 0 & \theta & \ast \\ \eta & 0 & \ast \\ \ast & \ast & \ast \end{bmatrix}.
  \]
Since $B$ has rank 2, it follows that $\epsilon \ne 0$ and $\zeta \ne 0$.
We have
  \[
  V^t A \mathbf{x}_2 = V^t C \mathbf{x}_3 = V^t C ( \gamma \mathbf{x}_1 + \delta \mathbf{x}_2 ) = \gamma V^t C \mathbf{x}_1 + \delta V^t C \mathbf{x}_2,
  \]
and so the second column of $V^t A U$ is a linear combination of the first two columns of $V^t C U$.
Since $\mathbf{x}_3 \ne 0$, it follows that $\gamma$ and $\delta$ are not both 0, and so $\eta = 0$ or $\theta = 0$ (or both).
In either case, adding a multiple of $B$ to $C$ (that is, changing basis along the third direction),
and applying the same change of basis matrices $V^t$ and $U$ along the first and second directions,
gives an array in which the third frontal slice has rank 2.
This contradicts the assumption that $\alpha A + \beta B + C$ has rank 3.

Assume $X$ has rank 3.
If $Y$ has rank 2, then we interchange the first and second directions of the array,
which amounts to applying the usual matrix transpose to the frontal slices $A, B, C$.
Equivalently, we interchange $X$ and $Y$, which reduces to the previous paragraph.
So we may assume that $Y$ also has rank 3.

Using the previous equations, together with
  \[
  \mathbf{y}_2^t A \mathbf{x}_3 = \mathbf{y}_3^t C \mathbf{x}_3 = \mathbf{y}_3^t A \mathbf{x}_2,
  \]
we obtain
  \[
  Y^t A X = \begin{bmatrix} 0 & 0 & 0 \\ 0 & 0 & \gamma \\ 0 & \gamma & \delta \end{bmatrix}.
  \]
Using the previous equations, we obtain
  \[
  Y^t B X = \begin{bmatrix} 0 & 0 & \zeta \\ 0 & 0 & 0 \\ \epsilon & 0 & \eta \end{bmatrix}.
  \]
Using the previous equations, together with
  \begin{alignat*}{3}
  \mathbf{y}_1^t C \mathbf{x}_1 &= 0,
  &\qquad
  \mathbf{y}_1^t C \mathbf{x}_3 &= \mathbf{y}_1^t A \mathbf{x}_2 = 0,
  \\
  \mathbf{y}_2^t C \mathbf{x}_2 &= 0,
  &\qquad
  \mathbf{y}_2^t C \mathbf{x}_3 &= \mathbf{y}_2^t A \mathbf{x}_2 = 0,
  \\
  \mathbf{y}_3^t C \mathbf{x}_1 &= \mathbf{y}_2^t A \mathbf{x}_1 = 0,
  &\qquad
  \mathbf{y}_3^t C \mathbf{x}_2 &= \mathbf{y}_2^t A \mathbf{x}_2 = 0,
  \end{alignat*}
we obtain
  \[
  Y^t C X = \begin{bmatrix} 0 & \lambda & 0 \\ \kappa & 0 & 0 \\ 0 & 0 & \gamma \end{bmatrix}.
  \]
If $\delta \ne 0$ (respectively $\eta \ne 0$) then we add a multiple of $A$ (respectively $B$) to $C$
to eliminate $\gamma$ and obtain an array for which $Y^t C X$ has rank 2; but this contradicts the assumption that
$\alpha A + \beta B + C$ has rank 3.
So we may assume that $\delta = \eta = 0$:
  \[
  Y^t A X = \begin{bmatrix} 0 & 0 & 0 \\ 0 & 0 & \gamma \\ 0 & \gamma & 0 \end{bmatrix},
  \quad
  Y^t B X = \begin{bmatrix} 0 & 0 & \zeta \\ 0 & 0 & 0 \\ \epsilon & 0 & 0 \end{bmatrix},
  \quad
  Y^t C X = \begin{bmatrix} 0 & \lambda & 0 \\ \kappa & 0 & 0 \\ 0 & 0 & \gamma \end{bmatrix}.
  \]
Interchanging the first and second vertical slices of $T = [A|B|C]$, and applying the same transformations,
amounts to interchanging the first and second columns in each of the above matrices.  We now have this array:
  \[
  \left[
  \begin{array}{ccc|ccc|ccc}
  0      & 0 & 0      & 0 & 0        & \zeta & \lambda & 0      & 0      \\
  0      & 0 & \gamma & 0 & 0        & 0     & 0       & \kappa & 0      \\
  \gamma & 0 & 0      & 0 & \epsilon & 0     & 0       & 0      & \gamma
  \end{array}
  \right].
  \]
But $\lambda$, $\kappa$, $\gamma$ are all nonzero by our assumption that the third frontal slice has rank 3.
We scale the first, second and third horizontal slices by $1/\lambda$, $1/\kappa$, $1/\gamma$ respectively:
  \[
  \left[
  \begin{array}{ccc|ccc|ccc}
  0 & 0 & 0                 & 0 & 0                   & \zeta/\lambda & 1 & 0 & 0 \\
  0 & 0 & \gamma/\kappa & 0 & 0                   & 0                 & 0 & 1 & 0 \\
  1 & 0 & 0                 & 0 & \epsilon/\gamma & 0                 & 0 & 0 & 1
  \end{array}
  \right].
  \]
From this array we subtract the following array of rank 2, where the bars denote complex conjugates:
  \begin{align*}
  &
  \left[
  \begin{array}{ccc|ccc|ccc}
  0 & 0 & 0      & 0 & 0        & 0 & 0 & 0 & 0 \\
  0 & 0 & \gamma/\kappa & 0 & 0        & -\overline{\epsilon/\gamma}     & 0 & 0 & 0 \\
  1 & 0 & 0      & -\overline{\zeta/\lambda} & 0 & 0     & 0 & 0 & 0
  \end{array}
  \right]
  =
  \\
  &
  \left[ \begin{array}{c} 0 \\ 1 \\ 0 \end{array} \right]
  \otimes
  \left[ \begin{array}{c} 0 \\ 0 \\ 1 \end{array} \right]
  \otimes
  \left[ \begin{array}{c} \gamma/\kappa \\ -\overline{\epsilon/\gamma} \\ 0 \end{array} \right]
  +
  \left[ \begin{array}{c} 0 \\ 0 \\ 1 \end{array} \right]
  \otimes
  \left[ \begin{array}{c} 1 \\ 0 \\ 0 \end{array} \right]
  \otimes
  \left[ \begin{array}{c} 1 \\ -\overline{\zeta/\lambda} \\ 0 \end{array} \right],
  \end{align*}
and obtain
  \[
  \left[
  \begin{array}{ccc|ccc|ccc}
  0 & 0 & 0 & 0     & 0        & \zeta/\lambda    & 1 & 0 & 0 \\
  0 & 0 & 0 & 0     & 0        & \overline{\epsilon/\gamma} & 0 & 1 & 0 \\
  0 & 0 & 0 & \overline{\zeta/\lambda} & \epsilon/\gamma & 0        & 0 & 0 & 1
  \end{array}
  \right].
  \]
The second frontal slice (which we still denote by $B$) is now Hermitian, and so its Jordan canonical form $J = E^{-1} B E$ is a diagonal matrix.
Changing basis along the first and second directions by $E^{-1}$ and $E$ respectively, we obtain
  \[
  \left[
  \begin{array}{ccc|ccc|ccc}
  0 & 0 & 0 & \mu & 0   & 0   & 1 & 0 & 0 \\
  0 & 0 & 0 & 0   & \nu & 0   & 0 & 1 & 0 \\
  0 & 0 & 0 & 0   & 0   & \xi & 0 & 0 & 1
  \end{array}
  \right].
  \]
This array clearly has rank 3, and the proof is complete.
\end{proof}


\section{Arrays over the field with two elements} \label{F2section}

In this section we use computer algebra to classify the canonical forms of $3 \times 3 \times 3$ arrays $X = [ x_{ijk} ]$
over the field $\mathbb{F}_2$ with two elements.
We use the term tensor for such an array to avoid confusion with the data structures called arrays in Maple.
The flattening of $X$ is the row vector
$\mathrm{flat}(X) = [ x_{111}, \dots, x_{ijk}, \dots, x_{333} ]$,
where the entries are in lex order by subscripts.
Conversely, the unflattening of such a row vector is the corresponding tensor.
We encode $X$ as the non-negative integer whose representation in base 2 is $\mathrm{flat}(X)$.
Conversely, the decoding of an integer in the range $0, \dots, 2^{27}{-}1$ is the
corresponding tensor.
The lex order on flattenings coincides with the natural order on integers.
The minimal element of a set of tensors is defined in terms of this total order.
We identify $X$ with an element of $\mathbb{F}_2^3 \otimes \mathbb{F}_2^3 \otimes \mathbb{F}_2^3$.
The direct product of general linear groups $GL_3(\mathbb{F}_2) \times GL_3(\mathbb{F}_2) \times GL_3(\mathbb{F}_2)$ acts on
$\mathbb{F}_2^3 \otimes \mathbb{F}_2^3 \otimes \mathbb{F}_2^3$,
and the canonical form of a tensor is the minimal element in its orbit under this group action.
The finite group $GL_3(\mathbb{F}_2)$ has order 168, and is generated by two elements:
the cyclic permutation
$e_1 \mapsto e_2$, $e_2 \mapsto e_3$, $e_3 \mapsto e_1$,
and the row operation
$e_1 \mapsto e_1 + e_2$, $e_2 \mapsto e_2$, $e_3 \mapsto e_3$.
The group $GL_3(\mathbb{F}_2) \times GL_3(\mathbb{F}_2) \times GL_3(\mathbb{F}_2)$
has order 4741632 and is generated by 6 elements.

For a tensor $X$ over $\mathbb{F}_2$, we use the spinning algorithm to compute its orbit.
In the following pseudocode, $\mathcal{O}$ is the current value of the orbit,
$\mathcal{L}$ contains the new elements computed during the previous iteration,
and $\mathcal{N}$ contains the new elements computed during the current iteration:
  \begin{enumerate}
  \item
  $\mathcal{O} \leftarrow \emptyset$;
  $\mathcal{L} \leftarrow \{ X \}$
  \item
  while $\mathcal{L} \ne \emptyset$ do:
    \begin{enumerate}
    \item
    $\mathcal{O} \leftarrow \mathcal{O} \cup \mathcal{L}$
    \item
    $\mathcal{N} \leftarrow \emptyset$;
    for $Y \in \mathcal{L}$ do for $M \in \mathcal{G}$ do:
      $\mathcal{N} \leftarrow \mathcal{N} \cup \{ M \cdot Y \}$
    \item
    $\mathcal{L} \leftarrow \mathcal{N} \setminus \mathcal{O}$
    \end{enumerate}
  \item
  return $\mathcal{O}$
  \end{enumerate}
We first create a large Maple array, called \texttt{orbitarray}, with $2^{27}{-}1$ entries.
The indices of \texttt{orbitarray} correspond to nonzero tensors: for an index $i$
we first decode $i$ by writing it as a binary numeral of 27 bits (adding leading 0s if necessary),
and then unflatten this binary numeral to obtain the corresponding tensor.
To start, every entry of \texttt{orbitarray} is set to 0.
We then perform the following iteration:
  \begin{enumerate}
  \item
  $\omega \leftarrow 0$, $i \leftarrow 0$
  \item
  while $i < 2^{27}{-}1$ do:
    \begin{enumerate}
    \item
    $i \leftarrow i + 1$
    \item
    if $\texttt{orbitarray}[i] = 0$ then
      \begin{enumerate}
      \item
      $\omega \leftarrow \omega + 1$
      \item
      $\texttt{findorbit}[i]$
      \end{enumerate}
    \end{enumerate}
  \end{enumerate}
Procedure \texttt{findorbit} takes the index $i$, decodes and unflattens it
to the corresponding tensor $X$,
uses the spinning algorithm to generate the orbit $\mathcal{O}(X)$, and
sets the corresponding entries of \texttt{orbitarray} to the orbit index $\omega$.
Upon termination, $\omega$ equals the total number of orbits for the group
action, and \texttt{orbitarray} represents the function which assigns to each tensor the index
number of its orbit.
The natural order of the index numbers of the orbits agrees with the lex order
on the minimal elements in the orbits (the canonical forms of the tensors).

The next step is to compute the ranks of the orbits.
We create another Maple array, called \texttt{linkarray}, of the same size as \texttt{orbitarray}.
We use the data from \texttt{orbitarray} to set entry $i$ of \texttt{linkarray}
(representing the tensor $X$)
equal to the index $j$ of the next tensor in lex order in the orbit containing $X$.
We then create another Maple array of the same size, called \texttt{rankarray},
and initialize every entry to 0.
We generate all simple tensors (tensor products of nonzero vectors) and
set the corresponding entries of \texttt{rankarray} to 1.
Each index $i$ for which $\texttt{rankarray}[i] = 1$ represents the encoding of a tensor of rank 1.
Let $E$ denote the minimal tensor of rank 1: its flattening is $[0,\dots,0,1]$.
We then perform the following iteration:
  \begin{enumerate}
  \item
  $\texttt{oldrank} \leftarrow 0$, $\texttt{finished} \leftarrow \textrm{false}$
  \item
  While not \texttt{finished} do:
    \begin{enumerate}
    \item
    $\texttt{oldrank} \leftarrow \texttt{oldrank} + 1$,
    $\texttt{finished} \leftarrow \textrm{true}$
    \item
    For each index $i$ for which $\texttt{rankarray}[i] = \texttt{oldrank}$, do:
      \begin{enumerate}
      \item
      Let $X$ be the unflattening of the decoding of $i$.
      \item
      Set $Y \leftarrow X + E$:
      this amounts to changing the rightmost bit of the flattening of $X$ from 0 to 1 or from 1 to 0.
      \item
      Let $j$ be the encoding of the flattening of $Y$.
      Thus $j = i+1$ if $i$ is even, and $j = i-1$ if $i$ is odd.
      \item
      If $\texttt{rankarray}[j] = 0$, then $Y$ has rank $\texttt{oldrank}+1$.
      In this case:
        \begin{itemize}
        \item
        Use \texttt{linkarray} to store $\texttt{oldrank}+1$ in every entry
        of \texttt{rankarray} corresponding to the tensors in the orbit of $Y$.
        \item
        $\texttt{finished} \leftarrow \textrm{false}$
        \end{itemize}
      \end{enumerate}
    \end{enumerate}
  \end{enumerate}
The iteration terminates when every entry of \texttt{rankarray} contains a positive integer,
which is the rank of the corresponding (nonzero) tensor.

To reduce the number of orbits, we consider the larger group
  \[
  G = \big( GL_3(\mathbb{F}_2) \times GL_3(\mathbb{F}_2) \times GL_3(\mathbb{F}_2) \big) \rtimes S_3,
  \]
where the symmetric group $S_3$ permutes the three directions.
We first compute the small orbits obtained by the action of $GL_3(\mathbb{F}_2) \times GL_3(\mathbb{F}_2) \times GL_3(\mathbb{F}_2)$
and then apply the permutations to determine which small orbits combine to make a single large orbit.
Given the canonical form $X$ of a small orbit $\mathcal{O}$ with index number $i$,
we apply the elements of $S_3$ to obtain tensors $X_1 = X, \dots, X_6$.
We then use the Maple arrays, which we have already computed, to find the index numbers
$i_1 = i, \dots, i_6$ of the small orbits containing these tensors.
We conclude that the union $\mathcal{O}_{i_1} \cup \cdots \cup \mathcal{O}_{i_6}$
is a large orbit for the action of $G$.
The canonical form for this large orbit is the smallest (in lex order)
of the canonical forms of $\mathcal{O}_{i_1}, \dots, \mathcal{O}_{i_6}$.
There are 115 (nonzero) small orbits and 55 (nonzero) large orbits:
\[
\begin{array}{lrrrrrrr}
\text{rank}
& 0 & 1 & 2 & 3 & 4 & 5 & 6
\\
\text{$\#$ small}
& 1 & 1 & 4 & 18 & 44 & 45 & 3
\\
\text{$\#$ large}
& 1 & 1 & 2 & 8 & 18 & 23 & 3
\\
\text{$\#$ tensors}
& 1 & 343 & 43218 & 2372286 & 47506872 & 83670048 & 624960
\\
\text{percent}
& 0.0000
& 0.0003
& 0.0322
& 1.7675
& 35.3954
& 62.3390
& 0.4656
\end{array}
\]
For the large orbit sizes and canonical forms, see Table \ref{table333}.
This computation took just under 282 minutes with Maple 16 on a
Lenovo ThinkCentre M91p Tower 7052A8U i7-2600 CPU (Quad Core 3.40/3.80GHz)
using Windows 7 Professional 64-bit
with 16 gigabytes of RAM.

For similar results for other tensor formats over $\mathbb{F}_2$, see \cite{BH,BS}.

\newcommand{\nn}{\!\!\!\!}
\newcommand{\sss}{-2pt}

\begin{table} \small
\begin{tabular}{rrrccccccccccccccccccccccccccc}
$\#$ & rank & size & \multicolumn{27}{c}{canonical form} \\ \midrule
   1 & 1 &      343 & .&\nn .&\nn .&\nn .&\nn .&\nn .&\nn .&\nn .&\nn .&\nn .&\nn .&\nn .&\nn .&\nn .&\nn .&\nn .&\nn .&\nn .&\nn .&\nn .&\nn .&\nn .&\nn .&\nn .&\nn .&\nn .&\nn 1 \\ \midrule
   2 & 2 &     6174 & .&\nn .&\nn .&\nn .&\nn .&\nn .&\nn .&\nn .&\nn .&\nn .&\nn .&\nn .&\nn .&\nn .&\nn .&\nn .&\nn .&\nn .&\nn .&\nn .&\nn .&\nn .&\nn .&\nn 1&\nn .&\nn 1&\nn . \\[\sss]
   3 & 2 &    37044 & .&\nn .&\nn .&\nn .&\nn .&\nn .&\nn .&\nn .&\nn .&\nn .&\nn .&\nn .&\nn .&\nn .&\nn .&\nn .&\nn .&\nn 1&\nn .&\nn .&\nn .&\nn .&\nn 1&\nn .&\nn .&\nn .&\nn . \\ \midrule
   4 & 3 &     3528 & .&\nn .&\nn .&\nn .&\nn .&\nn .&\nn .&\nn .&\nn .&\nn .&\nn .&\nn .&\nn .&\nn .&\nn .&\nn .&\nn .&\nn .&\nn .&\nn .&\nn 1&\nn .&\nn 1&\nn .&\nn 1&\nn .&\nn . \\[\sss]
   5 & 3 &     4116 & .&\nn .&\nn .&\nn .&\nn .&\nn .&\nn .&\nn .&\nn .&\nn .&\nn .&\nn .&\nn .&\nn .&\nn 1&\nn .&\nn 1&\nn .&\nn .&\nn .&\nn .&\nn .&\nn 1&\nn .&\nn .&\nn 1&\nn 1 \\[\sss]
   6 & 3 &    18522 & .&\nn .&\nn .&\nn .&\nn .&\nn .&\nn .&\nn .&\nn .&\nn .&\nn .&\nn .&\nn .&\nn .&\nn .&\nn .&\nn .&\nn 1&\nn .&\nn .&\nn .&\nn .&\nn .&\nn 1&\nn .&\nn 1&\nn . \\[\sss]
   7 & 3 &   148176 & .&\nn .&\nn .&\nn .&\nn .&\nn .&\nn .&\nn .&\nn .&\nn .&\nn .&\nn .&\nn .&\nn .&\nn 1&\nn .&\nn 1&\nn .&\nn .&\nn .&\nn .&\nn .&\nn 1&\nn .&\nn 1&\nn .&\nn . \\[\sss]
   8 & 3 &   222264 & .&\nn .&\nn .&\nn .&\nn .&\nn .&\nn .&\nn .&\nn .&\nn .&\nn .&\nn .&\nn .&\nn .&\nn .&\nn .&\nn .&\nn 1&\nn .&\nn .&\nn .&\nn .&\nn 1&\nn .&\nn 1&\nn .&\nn . \\[\sss]
   9 & 3 &   592704 & .&\nn .&\nn .&\nn .&\nn .&\nn .&\nn .&\nn .&\nn .&\nn .&\nn .&\nn .&\nn .&\nn .&\nn .&\nn .&\nn .&\nn 1&\nn .&\nn 1&\nn .&\nn 1&\nn .&\nn .&\nn .&\nn .&\nn . \\[\sss]
  10 & 3 &   592704 & .&\nn .&\nn .&\nn .&\nn .&\nn .&\nn .&\nn .&\nn .&\nn .&\nn .&\nn .&\nn .&\nn .&\nn 1&\nn .&\nn 1&\nn .&\nn 1&\nn .&\nn .&\nn .&\nn .&\nn .&\nn .&\nn 1&\nn . \\[\sss]
  11 & 3 &   790272 & .&\nn .&\nn .&\nn .&\nn .&\nn .&\nn .&\nn .&\nn 1&\nn .&\nn .&\nn .&\nn .&\nn 1&\nn .&\nn .&\nn .&\nn .&\nn 1&\nn .&\nn .&\nn .&\nn .&\nn .&\nn .&\nn .&\nn . \\ \midrule
  12 & 4 &   148176 & .&\nn .&\nn .&\nn .&\nn .&\nn .&\nn .&\nn .&\nn .&\nn .&\nn .&\nn .&\nn .&\nn .&\nn 1&\nn .&\nn 1&\nn .&\nn .&\nn .&\nn 1&\nn .&\nn .&\nn .&\nn 1&\nn .&\nn . \\[\sss]
  13 & 4 &   197568 & .&\nn .&\nn .&\nn .&\nn .&\nn 1&\nn .&\nn 1&\nn .&\nn .&\nn .&\nn 1&\nn .&\nn .&\nn .&\nn 1&\nn .&\nn .&\nn .&\nn 1&\nn .&\nn 1&\nn 1&\nn .&\nn .&\nn 1&\nn . \\[\sss]
  14 & 4 &   222264 & .&\nn .&\nn .&\nn .&\nn .&\nn .&\nn .&\nn .&\nn .&\nn .&\nn .&\nn .&\nn .&\nn .&\nn .&\nn .&\nn .&\nn 1&\nn .&\nn .&\nn 1&\nn .&\nn 1&\nn .&\nn 1&\nn .&\nn . \\[\sss]
  15 & 4 &   263424 & .&\nn .&\nn .&\nn .&\nn .&\nn .&\nn .&\nn .&\nn 1&\nn .&\nn 1&\nn .&\nn 1&\nn .&\nn .&\nn .&\nn .&\nn .&\nn 1&\nn .&\nn .&\nn 1&\nn 1&\nn .&\nn .&\nn .&\nn . \\[\sss]
  16 & 4 &   444528 & .&\nn .&\nn .&\nn .&\nn .&\nn .&\nn .&\nn .&\nn .&\nn .&\nn .&\nn .&\nn .&\nn .&\nn 1&\nn .&\nn 1&\nn .&\nn .&\nn .&\nn 1&\nn .&\nn 1&\nn .&\nn 1&\nn .&\nn . \\[\sss]
  17 & 4 &   592704 & .&\nn .&\nn .&\nn .&\nn .&\nn .&\nn .&\nn .&\nn .&\nn .&\nn .&\nn .&\nn .&\nn .&\nn 1&\nn .&\nn 1&\nn .&\nn 1&\nn .&\nn .&\nn .&\nn 1&\nn .&\nn .&\nn 1&\nn 1 \\[\sss]
  18 & 4 &  1185408 & .&\nn .&\nn .&\nn .&\nn .&\nn .&\nn .&\nn .&\nn 1&\nn .&\nn .&\nn .&\nn .&\nn .&\nn 1&\nn .&\nn 1&\nn .&\nn 1&\nn .&\nn .&\nn .&\nn .&\nn .&\nn .&\nn .&\nn . \\[\sss]
  19 & 4 &  1778112 & .&\nn .&\nn .&\nn .&\nn .&\nn .&\nn .&\nn .&\nn .&\nn .&\nn .&\nn .&\nn .&\nn .&\nn 1&\nn .&\nn 1&\nn .&\nn .&\nn .&\nn 1&\nn 1&\nn .&\nn .&\nn .&\nn .&\nn . \\[\sss]
  20 & 4 &  1778112 & .&\nn .&\nn .&\nn .&\nn .&\nn .&\nn .&\nn .&\nn 1&\nn .&\nn .&\nn .&\nn .&\nn .&\nn .&\nn .&\nn 1&\nn .&\nn .&\nn .&\nn 1&\nn 1&\nn .&\nn .&\nn .&\nn .&\nn . \\[\sss]
  21 & 4 &  1778112 & .&\nn .&\nn .&\nn .&\nn .&\nn .&\nn .&\nn .&\nn 1&\nn .&\nn .&\nn .&\nn .&\nn 1&\nn .&\nn .&\nn .&\nn .&\nn .&\nn 1&\nn 1&\nn 1&\nn .&\nn .&\nn 1&\nn .&\nn . \\[\sss]
  22 & 4 &  2370816 & .&\nn .&\nn .&\nn .&\nn .&\nn .&\nn .&\nn .&\nn 1&\nn .&\nn 1&\nn .&\nn 1&\nn .&\nn .&\nn .&\nn .&\nn .&\nn 1&\nn .&\nn .&\nn 1&\nn 1&\nn .&\nn .&\nn 1&\nn . \\[\sss]
  23 & 4 &  2370816 & .&\nn .&\nn .&\nn .&\nn .&\nn .&\nn .&\nn .&\nn 1&\nn .&\nn 1&\nn .&\nn 1&\nn .&\nn .&\nn .&\nn .&\nn .&\nn 1&\nn .&\nn .&\nn 1&\nn 1&\nn 1&\nn .&\nn 1&\nn . \\[\sss]
  24 & 4 &  3556224 & .&\nn .&\nn .&\nn .&\nn .&\nn .&\nn .&\nn .&\nn 1&\nn .&\nn .&\nn .&\nn .&\nn .&\nn 1&\nn .&\nn 1&\nn .&\nn 1&\nn .&\nn .&\nn .&\nn .&\nn .&\nn .&\nn 1&\nn . \\[\sss]
  25 & 4 &  4741632 & .&\nn .&\nn .&\nn .&\nn .&\nn .&\nn .&\nn .&\nn 1&\nn .&\nn .&\nn .&\nn .&\nn 1&\nn .&\nn .&\nn .&\nn .&\nn 1&\nn .&\nn .&\nn .&\nn .&\nn .&\nn .&\nn 1&\nn . \\[\sss]
  26 & 4 &  4741632 & .&\nn .&\nn .&\nn .&\nn .&\nn .&\nn .&\nn .&\nn 1&\nn .&\nn .&\nn .&\nn .&\nn 1&\nn .&\nn 1&\nn .&\nn .&\nn 1&\nn .&\nn .&\nn .&\nn .&\nn 1&\nn .&\nn .&\nn . \\[\sss]
  27 & 4 &  7112448 & .&\nn .&\nn .&\nn .&\nn .&\nn .&\nn .&\nn .&\nn 1&\nn .&\nn .&\nn .&\nn .&\nn .&\nn 1&\nn .&\nn 1&\nn .&\nn 1&\nn .&\nn .&\nn .&\nn 1&\nn .&\nn .&\nn .&\nn . \\[\sss]
  28 & 4 &  7112448 & .&\nn .&\nn .&\nn .&\nn .&\nn .&\nn .&\nn .&\nn 1&\nn .&\nn .&\nn .&\nn .&\nn 1&\nn .&\nn .&\nn .&\nn .&\nn 1&\nn .&\nn .&\nn .&\nn .&\nn 1&\nn .&\nn 1&\nn . \\[\sss]
  29 & 4 &  7112448 & .&\nn .&\nn .&\nn .&\nn .&\nn .&\nn .&\nn .&\nn 1&\nn .&\nn .&\nn .&\nn .&\nn 1&\nn .&\nn 1&\nn .&\nn .&\nn .&\nn 1&\nn 1&\nn 1&\nn .&\nn .&\nn .&\nn .&\nn . \\ \midrule
  30 & 5 &    28224 & .&\nn .&\nn .&\nn .&\nn .&\nn 1&\nn .&\nn 1&\nn .&\nn .&\nn .&\nn 1&\nn .&\nn .&\nn .&\nn 1&\nn .&\nn .&\nn .&\nn 1&\nn .&\nn 1&\nn .&\nn .&\nn .&\nn .&\nn . \\[\sss]
  31 & 5 &   148176 & .&\nn .&\nn .&\nn .&\nn .&\nn .&\nn .&\nn .&\nn 1&\nn .&\nn .&\nn .&\nn .&\nn .&\nn .&\nn .&\nn 1&\nn .&\nn .&\nn .&\nn 1&\nn .&\nn 1&\nn .&\nn 1&\nn .&\nn . \\[\sss]
  32 & 5 &   148176 & .&\nn .&\nn .&\nn .&\nn .&\nn .&\nn .&\nn .&\nn 1&\nn .&\nn .&\nn .&\nn .&\nn .&\nn 1&\nn .&\nn 1&\nn .&\nn .&\nn .&\nn 1&\nn .&\nn 1&\nn .&\nn 1&\nn .&\nn . \\[\sss]
  33 & 5 &   169344 & .&\nn .&\nn .&\nn .&\nn .&\nn .&\nn .&\nn .&\nn .&\nn .&\nn .&\nn 1&\nn .&\nn 1&\nn .&\nn 1&\nn .&\nn .&\nn .&\nn 1&\nn .&\nn 1&\nn .&\nn .&\nn .&\nn 1&\nn 1 \\[\sss]
  34 & 5 &   592704 & .&\nn .&\nn .&\nn .&\nn .&\nn 1&\nn .&\nn 1&\nn .&\nn .&\nn .&\nn 1&\nn .&\nn .&\nn .&\nn 1&\nn .&\nn .&\nn .&\nn 1&\nn .&\nn 1&\nn 1&\nn .&\nn .&\nn 1&\nn 1 \\[\sss]
  35 & 5 &  1185408 & .&\nn .&\nn .&\nn .&\nn .&\nn 1&\nn .&\nn 1&\nn .&\nn .&\nn .&\nn 1&\nn .&\nn .&\nn .&\nn 1&\nn .&\nn .&\nn .&\nn 1&\nn .&\nn 1&\nn .&\nn .&\nn .&\nn 1&\nn . \\[\sss]
  36 & 5 &  1580544 & .&\nn .&\nn .&\nn .&\nn .&\nn 1&\nn .&\nn 1&\nn .&\nn .&\nn .&\nn 1&\nn .&\nn .&\nn .&\nn 1&\nn .&\nn .&\nn 1&\nn .&\nn .&\nn 1&\nn 1&\nn .&\nn .&\nn .&\nn . \\[\sss]
  37 & 5 &  1580544 & .&\nn .&\nn .&\nn .&\nn .&\nn 1&\nn .&\nn 1&\nn .&\nn .&\nn .&\nn 1&\nn .&\nn .&\nn .&\nn 1&\nn .&\nn .&\nn 1&\nn .&\nn .&\nn 1&\nn 1&\nn .&\nn .&\nn .&\nn 1 \\[\sss]
  38 & 5 &  1778112 & .&\nn .&\nn .&\nn .&\nn .&\nn .&\nn .&\nn .&\nn 1&\nn .&\nn .&\nn .&\nn .&\nn .&\nn 1&\nn .&\nn 1&\nn .&\nn .&\nn .&\nn 1&\nn 1&\nn .&\nn .&\nn .&\nn .&\nn . \\[\sss]
  39 & 5 &  1778112 & .&\nn .&\nn .&\nn .&\nn .&\nn .&\nn .&\nn .&\nn 1&\nn .&\nn .&\nn .&\nn .&\nn 1&\nn .&\nn 1&\nn .&\nn .&\nn .&\nn .&\nn 1&\nn 1&\nn .&\nn .&\nn 1&\nn 1&\nn . \\[\sss]
  40 & 5 &  2370816 & .&\nn .&\nn .&\nn .&\nn .&\nn 1&\nn .&\nn 1&\nn .&\nn .&\nn .&\nn 1&\nn .&\nn .&\nn .&\nn 1&\nn .&\nn .&\nn .&\nn 1&\nn .&\nn 1&\nn 1&\nn .&\nn 1&\nn .&\nn . \\[\sss]
  41 & 5 &  2370816 & .&\nn .&\nn .&\nn .&\nn .&\nn 1&\nn .&\nn 1&\nn .&\nn .&\nn .&\nn 1&\nn .&\nn 1&\nn .&\nn 1&\nn .&\nn .&\nn 1&\nn .&\nn .&\nn 1&\nn .&\nn .&\nn .&\nn .&\nn 1 \\[\sss]
  42 & 5 &  2370816 & .&\nn .&\nn .&\nn .&\nn .&\nn 1&\nn .&\nn 1&\nn .&\nn .&\nn .&\nn 1&\nn .&\nn 1&\nn .&\nn 1&\nn .&\nn .&\nn 1&\nn .&\nn .&\nn 1&\nn .&\nn .&\nn 1&\nn .&\nn 1 \\[\sss]
  43 & 5 &  3556224 & .&\nn .&\nn .&\nn .&\nn .&\nn .&\nn .&\nn .&\nn 1&\nn .&\nn .&\nn .&\nn .&\nn .&\nn 1&\nn .&\nn 1&\nn .&\nn .&\nn 1&\nn .&\nn 1&\nn .&\nn .&\nn .&\nn .&\nn . \\[\sss]
  44 & 5 &  4741632 & .&\nn .&\nn .&\nn .&\nn .&\nn .&\nn .&\nn .&\nn 1&\nn .&\nn 1&\nn .&\nn 1&\nn .&\nn .&\nn .&\nn .&\nn .&\nn 1&\nn .&\nn .&\nn 1&\nn 1&\nn 1&\nn 1&\nn .&\nn . \\[\sss]
  45 & 5 &  4741632 & .&\nn .&\nn .&\nn .&\nn .&\nn 1&\nn .&\nn 1&\nn .&\nn .&\nn .&\nn 1&\nn .&\nn 1&\nn .&\nn 1&\nn .&\nn .&\nn .&\nn 1&\nn .&\nn 1&\nn .&\nn .&\nn 1&\nn .&\nn . \\[\sss]
  46 & 5 &  4741632 & .&\nn .&\nn .&\nn .&\nn .&\nn 1&\nn .&\nn 1&\nn .&\nn .&\nn .&\nn 1&\nn .&\nn 1&\nn .&\nn 1&\nn .&\nn .&\nn 1&\nn .&\nn .&\nn .&\nn .&\nn .&\nn .&\nn 1&\nn . \\[\sss]
  47 & 5 &  4741632 & .&\nn .&\nn .&\nn .&\nn .&\nn 1&\nn .&\nn 1&\nn .&\nn .&\nn .&\nn 1&\nn .&\nn 1&\nn .&\nn 1&\nn .&\nn .&\nn 1&\nn .&\nn .&\nn .&\nn .&\nn .&\nn .&\nn 1&\nn 1 \\[\sss]
  48 & 5 &  4741632 & .&\nn .&\nn .&\nn .&\nn .&\nn 1&\nn .&\nn 1&\nn .&\nn .&\nn .&\nn 1&\nn .&\nn 1&\nn .&\nn 1&\nn .&\nn .&\nn 1&\nn .&\nn .&\nn 1&\nn .&\nn .&\nn .&\nn 1&\nn . \\[\sss]
  49 & 5 &  4741632 & .&\nn .&\nn .&\nn .&\nn .&\nn 1&\nn .&\nn 1&\nn .&\nn .&\nn .&\nn 1&\nn .&\nn 1&\nn .&\nn 1&\nn .&\nn .&\nn 1&\nn .&\nn .&\nn 1&\nn .&\nn .&\nn 1&\nn 1&\nn . \\[\sss]
  50 & 5 &  7112448 & .&\nn .&\nn .&\nn .&\nn .&\nn .&\nn .&\nn .&\nn 1&\nn .&\nn .&\nn 1&\nn .&\nn 1&\nn .&\nn 1&\nn .&\nn .&\nn 1&\nn .&\nn .&\nn .&\nn .&\nn 1&\nn .&\nn 1&\nn . \\[\sss]
  51 & 5 & 14224896 & .&\nn .&\nn .&\nn .&\nn .&\nn .&\nn .&\nn .&\nn 1&\nn .&\nn .&\nn .&\nn .&\nn 1&\nn .&\nn 1&\nn .&\nn .&\nn 1&\nn .&\nn .&\nn .&\nn .&\nn 1&\nn .&\nn 1&\nn . \\[\sss]
  52 & 5 & 14224896 & .&\nn .&\nn .&\nn .&\nn .&\nn .&\nn .&\nn .&\nn 1&\nn .&\nn .&\nn 1&\nn .&\nn 1&\nn .&\nn 1&\nn .&\nn .&\nn .&\nn 1&\nn .&\nn 1&\nn .&\nn .&\nn .&\nn .&\nn . \\ \midrule
  53 & 6 &    32256 & .&\nn .&\nn 1&\nn .&\nn 1&\nn .&\nn 1&\nn .&\nn .&\nn .&\nn 1&\nn .&\nn 1&\nn .&\nn .&\nn .&\nn 1&\nn 1&\nn 1&\nn .&\nn .&\nn .&\nn 1&\nn 1&\nn 1&\nn 1&\nn . \\[\sss]
  54 & 6 &   197568 & .&\nn .&\nn .&\nn .&\nn .&\nn 1&\nn .&\nn 1&\nn .&\nn .&\nn .&\nn 1&\nn .&\nn .&\nn .&\nn 1&\nn .&\nn .&\nn .&\nn 1&\nn .&\nn 1&\nn .&\nn .&\nn .&\nn .&\nn 1 \\[\sss]
  55 & 6 &   395136 & .&\nn .&\nn .&\nn .&\nn .&\nn 1&\nn .&\nn 1&\nn .&\nn .&\nn .&\nn 1&\nn .&\nn 1&\nn .&\nn 1&\nn .&\nn .&\nn .&\nn 1&\nn .&\nn 1&\nn .&\nn .&\nn .&\nn 1&\nn 1 \\ \midrule
\end{tabular}
\caption{Large orbits of $3 \times 3 \times 3$ tensors over $\mathbb{F}_2$}
\label{table333}
\end{table}


\section{Acknowledgements}

The first author was partially supported by a Discovery Grant from NSERC.
We thank J. M. F. ten Berge for sending us the unpublished notes by Rocci \cite{Rocci}.


\begin{thebibliography}{9}

\bibitem{BH}
\textsc{M. R. Bremner, J. Hu}:
Canonical forms of small tensors over $\mathbb{F}_2$.
\texttt{arXiv:1206.5179v1 [math.CO]}

\bibitem{BS}
\textsc{M. R. Bremner, S. G. Stavrou}:
Canonical forms of $2 \times 2 \times 2$ and $2 \times 2 \times 2 \times 2$ tensors over $\mathbb{F}_2$ and $\mathbb{F}_3$.
\emph{Linear and Multilinear Algebra} (to appear).

\bibitem{deSilvaLim}
\textsc{V. de Silva, L.-H. Lim}:
Tensor rank and the ill-posedness of the best low-rank approximation problem.
\emph{SIAM J. Matrix Anal. Appl.}
30 (2008) 1084--1127.

\bibitem{JaJa}
\textsc{J. Ja'Ja'}:
Optimal evaluation of pairs of bilinear forms.
\emph{SIAM J. Comput.}
8 (1979) 443--462.

\bibitem{KoldaBader}
\textsc{T. G. Kolda, B. W. Bader}:
Tensor decompositions and applications.
\emph{SIAM Rev.}
51 (2009) 455--500.

\bibitem{Kruskal}
\textsc{J. B. Kruskal}:
Rank, decomposition, and uniqueness for 3-way and $N$-way arrays.
In: R. Coppi, S. Bolasco (editors), \emph{Multiway Data Analysis}, 7--18.
North-Holland, Amsterdam, 1989.

\bibitem{Rocci}
\textsc{R. Rocci}:
A $3 \times 3 \times 3$ array has maximum rank 5.
(Microsoft Word document, no date.)

\bibitem{tenBerge}
\textsc{J. M. F. ten Berge}:
Kruskal's polynomial for $2 \times 2 \times 2$ arrays, and a generalization to $2 \times n \times n$ arrays.
\emph{Psychometrika}
56 (1991) 631--636.

\bibitem{vonzurGathen}
\textsc{J. von zur Gathen}:
Maximal bilinear complexity and codes.
\emph{Linear Algebra Appl.}
144 (1991) 49--61.

\end{thebibliography}
\end{document}